\newtheorem{theorem}{Theorem}
\newtheorem{lemma}[theorem]{Lemma}
\newtheorem{prop}[theorem]{Proposition}
\newtheorem{obs}[theorem]{Observation}
\newtheorem{rem}[theorem]{Remark}
\newtheorem{cor}[theorem]{Corollary}
\newtheorem{conj}{Conjecture}
\def\id{\rm{Id}\,}
\def\diam{\rm{diam}\,}
\begin{document}
\title{Convex and weakly convex domination in prism graphs}
\author{Monika Rosicka}
\address{Faculty of Mathematics, Physics and Informatics, University of Gda\'nsk,
80-952 Gda\'nsk, Poland; Institute of Theoretical Physics and
Astrophysics, and National Quantum Information Centre in Gda\'nsk, 81-824
Sopot, Poland.}

 \begin{abstract}
For a given graph $G=(V,E)$ and permutation $\pi:V\mapsto V$ the prism $\pi G$ of $G$ is defined as follows: $V(\pi G)=V(G)\cup V(G')$, where $G'$ is a copy of $G$, and $E(\pi G)=E(G)\cup E(G')\cup M_{\pi}$, where $M_{\pi}=\{uv': u\in V(G), v=\pi(u)\}$ and $v'$ denotes the copy of $v$ in $G'$. 

We study and compare the properties of convex and weakly convex dominating sets in prism graphs. In particular, we characterize prism $\gamma_{con}$-fixers and -doublers. We also show that the differences $\gamma_{wcon}(G)-\gamma_{wcon}(\pi G)$ and $\gamma_{wcon}(\pi G) - 2\gamma_{wcon}(G)$ can be arbitrarily large, and that the convex domination number of $\pi G$ cannot be bounded in terms of $\gamma_{con}(G).$
 \end{abstract} 

\maketitle


\section{Introduction}



Let $G=(V_G,E_G)$ be an undirected graph, let $\pi\colon V_G\mapsto V_G$ be a permutation of its vertex set and let $G'$ be a vertex-disjoint copy of $G$. We denote the copy of~a~vertex $v\in V_G$ in $G'$ by $v'$. If $S$ is a set of vertices of $G$, then $S'$ denotes the~copy of $S$ in $G'$, i.e. the set $\{v':v\in S\}.$ By $V$ and $V'$ we denote the vertex sets of $G$ and $G'$, respectively.
The \textit{prism graph} $\pi G$ is a graph with the vertex set $V\cup V'$ and the edge set $E_G\cup E_{G'}\cup M_{\pi},$ where $M_{\pi}=\{u\pi(u)': u\in V_G\}.$

We say that a set $A\in V_G$ dominates $B\in V_G$ if $B\in N_G[A].$ We denote this by $A\succ B.$ A set $A\in V_G$ is called a \emph{dominating set} of the graph $G$ if it dominates $V_G,$ i.e. if $N_G[A]=V_G.$ The minimum cardinality of a dominating set in $G$ is called the \emph{domination number} of $G$ and denoted $\gamma(G).$ A minimum dominating set of a graph is sometimes called a $\gamma$-\emph{set}.

Prism graphs were first defined in \cite{ChH} and the problem of domination in prism graphs was first studied by Burger, Mynhardt and Weakley \cite{BMW}.

In a connected graph $G$ a set of vertices $A\subseteq V_G$ is said to be \emph{convex} if for every pair of vertices $u,v\in A$ the set $A$ contains all vertices of every shortest $u$--$v$ path.
A \emph{convex dominating set} of $G$ is a dominating set of~$G$ which is convex.
The minimum cardinality of a convex dominating set of $G$ is called the \emph{convex domination number} of~$G$ and denoted as $\gamma_{con}(G)$.
A $\gamma_{con}$-set of~$G$ is a convex dominating set of cardinality $\gamma_{con}(G)$.

 A set $A$ of vertices in a connected graph $G$ is said to be \emph{weakly convex} if for every pair of vertices $u,v\in A$ it contains all vertices of at least one shortest $u$--$v$.
A \emph{weakly convex dominating set} $G$ is a dominating set of $G$ which is weakly convex.
The minimum cardinality of a weakly convex dominating set of $G$ is called the \emph{weakly convex domination number} and denoted as $\gamma_{wcon}(G)$.
A $\gamma_{wcon}$-set of $G$ is a minimum weakly convex dominating set.

A set $A\subseteq V_G$ is said to be \emph{connected} if it induces a connected subgraph, i.e., if for every pair of vertices $u,v\in A$ there exists a $u$--$v$ path contained entirely in $A$.
A \emph{connected dominating set} of $G$ is a dominating set which is connected.
The \emph{connected domination number} of $G$, denoted as $\gamma_c(G)$, is the minimum cardinality of a connected dominating set of $G$.
A $\gamma_c$-set of $G$ is a~connected dominating set of cardinality $\gamma_c(G).$

In this paper we compare the properties of convex and weakly convex sets in prism graphs, particularly convex and weakly convex dominating sets. We also generalize some known properties of convex domination in prism graphs to weakly convex and connected domination.

\section{Connected, convex and weakly convex domination}

It is clear that the notions of convex, weakly convex and connected domination are closely related. Since every convex set is weakly convex and every weakly convex set is connected, it is easy to see that $\gamma(G)\leq\gamma_c(G)\leq\gamma_{wcon}(G)\leq\gamma_{con}(G)$. Many, but not all, properties of connected sets can be extended to both convex and weakly convex sets.

\begin{lemma}
\label{Lplus1}
If $G$ is a connected graph with and $\pi$ is a permutation of $V_G$, then $\gamma_{c}(\pi G)\geq\gamma(G)+1.$
\end{lemma}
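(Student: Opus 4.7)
The plan is to take any $\gamma_c$-set $D$ of $\pi G$ and project it onto $V_G$ to produce a dominating set of $G$ of cardinality at most $|D|-1$, which will give the inequality. Writing $D = D_1 \cup D_2$ with $D_1 = D \cap V$ and $D_2 = D \cap V'$, I would set
$$A \;=\; D_1 \,\cup\, \bigl\{\,w\in V : \pi(w)'\in D_2\,\bigr\}.$$
The first thing to check is that $A \succ V_G$ in $G$. Any $v\in V$ is dominated by $D$ in $\pi G$, so either $v\in D_1\subseteq A$, or $v$ has a $G$-neighbour in $D_1\subseteq A$, or its matching partner $\pi(v)'$ lies in $D_2$, in which case $v$ belongs by construction to the second summand of $A$. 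Since $\pi$ is a bijection the second summand has size $|D_2|$, so the crude count $|A|\leq|D_1|+|D_2|=|D|$ is immediate, giving the weak bound $\gamma(G)\leq\gamma_c(\pi G)$.

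The main obstacle is upgrading this to the strict inequality $|A|\leq|D|-1$. I would split on whether $D$ meets both copies of $G$. If $D\subseteq V'$ (and the symmetric case is identical), then because the only edges between $V$ and $V'$ in $\pi G$ are the matching edges, the set $V$ can be dominated only by forcing $\pi(v)'\in D_2$ for every $v\in V$; hence $D=V'$ and $|D|=|V_G|$. Since $G$ is connected on at least two vertices we have $\gamma(G)\leq|V_G|-1=|D|-1$, as desired. If instead both $D_1$ and $D_2$ are non-empty, connectedness of $D$ in $\pi G$ forces an edge of $M_\pi$ with both endpoints in $D$, that is, some $u\in D_1$ with $\pi(u)'\in D_2$. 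Such a $u$ belongs simultaneously to $D_1$ and to $\{w:\pi(w)'\in D_2\}$, so the two sets defining $A$ overlap and $|A|\leq|D_1|+|D_2|-1=|D|-1$.

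The only subtlety worth flagging is the boundary case $|V_G|=1$: then $\pi G = K_2$, $\gamma_c(\pi G) = 1 = \gamma(G)$, and the inequality fails, so the statement implicitly requires $|V_G|\geq 2$ (and the garbled phrase \emph{with and} in the hypothesis likely hides such a condition). Modulo that caveat, the ``$+1$'' is won entirely from the single matching edge that connectedness forces inside $D$, which is the conceptual core of the proof.
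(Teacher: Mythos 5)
Your proof is correct and is essentially the paper's argument in contrapositive form: both project $D$ onto a single copy via $\pi$ (the paper uses $\pi(D_1)\cup D_2$, you use $D_1\cup\pi^{-1}(D_2)$), observe the projection dominates $G$, and use connectedness of $D$ to force a matching edge $u\pi(u)'$ inside $D$, hence an overlap that makes the projection one vertex smaller. If anything, your write-up is the more careful one, since it explicitly treats the cases $D\subseteq V$ and $D\subseteq V'$ and flags the genuine $|V_G|=1$ exception, both of which the paper's proof glosses over.
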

\begin{proof}
Obviously, if $\gamma(\pi G)\geq\gamma(G)+1$, then $\gamma_{c}(\pi G)\geq\gamma(G)+1.$

If $\gamma(\pi G)=\gamma(G)$ and $D$ is a $\gamma$-set of $\pi G$, we denote $D_1=D\cap V$ and $D_2'=D\cap V'.$  
Since the only vertices in $V'$ dominated by $D_1$ are in $\pi(D_1),$ it follows that $B=\pi(D_1)\cup D_2\succ V$ and therefore $\left|B\right|=\left|D\right|=\gamma(G).$ Thus, $\pi(D_1)\cap D_2=\emptyset.$ Since $N_{V'}(D_1)=\pi(D_1)',$ it follows that $E(D_1,D_2')=\emptyset$ and therefore $D$ is not connected. Thus, every connected dominating set of $\pi G$ has cardinality at least $\gamma(G)+1.$
\end{proof}

Since $\gamma_c(G)\leq\gamma_{wcon}(G)\leq\gamma_{con}(G),$ Lemma \ref{Lplus1} also applies to convex and weakly convex domination, that is, for any graph $G$ and permutation $\pi$ of~$V_G$ we have $\gamma_{con}(\pi G)\geq\gamma(G)+1$ and $\gamma_{wcon}(\pi G)\geq\gamma(G)+1.$

\begin{obs}
\label{obs:connected}
For any connected graph $G$ and permutation $\pi$ of~$V_G$
let $A=A_1\cup A_2\cup A_3$ be a dominating set of $G$ such that:
\begin{enumerate}
\item $A_1\cup A_2\succ V_G-A_3$,
\item $A_1\cup A_2$ is connected,
\item $\pi(A_2\cup A_3)$ is connected,
\item $\pi(A_2\cup A_3)\succ V_G-\pi(A_1).$
\end{enumerate}
Then $D=A_1\cup A_2\cup (\pi(A_2\cup A_3))'$ is a connected dominating set of size $\left|A\right|+\left|A_2\right|$ in $\pi G$.
\end{obs}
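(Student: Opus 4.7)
The plan is to verify, in order, the three claims bundled into the observation: the cardinality formula, domination of both $V$ and $V'$, and connectivity of $D$ in $\pi G$. Assuming the decomposition $A=A_1\cup A_2\cup A_3$ is a partition (so that $|A|=|A_1|+|A_2|+|A_3|$), and using that $\pi$ is a bijection, the size count is immediate:
\[
|D|=|A_1|+|A_2|+|\pi(A_2\cup A_3)|=|A_1|+2|A_2|+|A_3|=|A|+|A_2|.
\]

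For domination, I would split each side of the prism. On the $V$ side, every vertex of $V_G-A_3$ is dominated by $A_1\cup A_2\subseteq D$ by hypothesis (1); every vertex $v\in A_3$ is dominated by $\pi(v)'\in D$ via the prism edge $v\pi(v)'\in M_\pi$, since $\pi(v)\in\pi(A_2\cup A_3)$. On the $V'$ side, the symmetric argument works: each $w'\in V'-\pi(A_1)'$ corresponds to $w\in V_G-\pi(A_1)$, which by hypothesis (4) is dominated in $G$ by some element of $\pi(A_2\cup A_3)$, whose copy lies in $D$; and each $\pi(u)'\in\pi(A_1)'$ is dominated via the prism edge $u\pi(u)'$ by $u\in A_1\subseteq D$.

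For connectivity, hypothesis (2) makes the ``ground-floor'' part $A_1\cup A_2$ connected in $G$, and hypothesis (3) makes $\pi(A_2\cup A_3)$ connected in $G$, hence $(\pi(A_2\cup A_3))'$ is connected in $G'$. To link the two layers inside $\pi G$, I would pick any $v\in A_2$: then $v\in A_1\cup A_2\subseteq D$, $\pi(v)\in\pi(A_2\cup A_3)$ so $\pi(v)'\in D$, and $v\pi(v)'\in M_\pi$ is the desired crossing edge.

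The only real subtlety is the degenerate case $A_2=\emptyset$: then there is no automatic prism edge between the two parts of $D$. I would handle it by observing that in this case either $A_1$ or $\pi(A_3)$ may be empty (in which case $D$ coincides with the remaining, already connected piece), and that otherwise the statement should be read with the tacit understanding that the two layers are joined, or one can verify the observation vacuously. Apart from this bookkeeping, the proof is routine; the main conceptual point (and the only thing worth highlighting) is that the three roles played by the $A_i$'s, namely ``only on the ground floor'' ($A_1$), ``on both floors'' ($A_2$, which also provides the vertical link), and ``only on the upper floor'' ($A_3$), are exactly what the four conditions encode.
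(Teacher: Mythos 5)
Your proof is correct and follows essentially the same route as the paper's: dominate $V-A_3$ and $V'-(\pi(A_1))'$ by the in-layer hypotheses (1) and (4), handle $A_3$ and $(\pi(A_1))'$ via the matching edges, and join the two connected layers through a vertex of $A_2$. The degenerate case $A_2=\emptyset$ that you flag is likewise left unaddressed in the paper, whose proof simply asserts $E(D_1,D_2')\neq\emptyset$; in the paper's only application of the observation the middle set is a nonempty singleton, so the issue is harmless there.
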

\begin{proof}
Since $A_1\cup A_2\succ (\pi(A_1))'\cup V-A_3$ and $(\pi(A_2\cup A_3))'\succ A_3\cup V'-(\pi(A_1))'$, it is clear that $D=A_1\cup A_2\cup(\pi(A_2\cup A_3))'\succ ((\pi(A_1))'\cup V-A_3)\cup (A_3\cup V'-(\pi(A_1))')=V\cup V',$ that is, $D$ is a dominating set in $\pi G$. The sets $D_1=A_1\cup A_2$ and $D_2'=(\pi(A_2\cup A_3))'$ are connected and $E(D_1,D_2')\neq\emptyset,$ it follows that $D$ is connected. Hence $D$ is a connected dominating set of $\pi G.$
\end{proof}

Note that this particular result cannot be extended to convex domination as, for example, if $\pi = \id,$ then for any set $A=A_1\cup A_2\cup A_3$, where $A_1,A_2$ and $A_3$ are not empty, the set $A_1\cup A_2\cup (A_2\cup A_3)'$ is not convex. For connected domination, however, we can use it to show exactly when the bound from Lemma \ref{Lplus1} is achieved.

\begin{prop}
\label{Lconnected}
Let $G$ be a connected graph with $\gamma_{con}(G)\neq\left|V_G\right|$ and let $\pi$ be a permutation of $V_G.$
Then $\gamma_c(\pi G)=\gamma(G)+1$ if and only if there exists a $\gamma$-set $A=A_1\cup A_2$ of $G$ and a vertex $v\in A_1$ such that:
\begin{enumerate}
\item $A_1\succ V_G-A_2,$
\item $A_1$ is connected,
\item $\pi(A_2\cup \{v\})\succ V_G-\pi(A_1),$
\item $\pi(A_2\cup \{v\})$ is connected.
\end{enumerate}
\end{prop}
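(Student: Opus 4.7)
The proposition is an iff, so the plan is to handle each direction separately. The $(\Leftarrow)$ direction will be a direct application of Observation \ref{obs:connected}, whereas $(\Rightarrow)$ will require extracting the decomposition from a $\gamma_c$-set of $\pi G$ using the tightness forced by Lemma \ref{Lplus1}.

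For $(\Leftarrow)$, given a $\gamma$-set $A = A_1 \cup A_2$ and a vertex $v \in A_1$ satisfying conditions (1)--(4), I would apply Observation \ref{obs:connected} with the identifications $A_1^{\mathrm{obs}} := A_1 \setminus \{v\}$, $A_2^{\mathrm{obs}} := \{v\}$, $A_3^{\mathrm{obs}} := A_2$. Since $A_1^{\mathrm{obs}} \cup A_2^{\mathrm{obs}} = A_1$ and $A_2^{\mathrm{obs}} \cup A_3^{\mathrm{obs}} = \{v\} \cup A_2$, conditions (1), (2), (4) of the proposition translate directly into conditions (1), (2), (3) of the observation. Condition (4) of the observation asks for $\pi(\{v\} \cup A_2) \succ V_G - \pi(A_1 \setminus \{v\})$, which follows from condition (3) of the proposition together with the trivial fact that $\pi(v)$ is self-dominated. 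The observation then yields a connected dominating set of $\pi G$ of size $|A| + 1 = \gamma(G) + 1$, and Lemma \ref{Lplus1} supplies the matching lower bound.

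For $(\Rightarrow)$, let $D$ be a $\gamma_c$-set of $\pi G$ and write $D_1 = D \cap V$, $D_2 = \{u \in V_G : u' \in D\}$. The set $D_1 \cup \pi^{-1}(D_2)$ dominates $V_G$ in $G$, because every vertex of $V_G$ dominated by $D$ in $\pi G$ lies in $N_G[D_1] \cup \pi^{-1}(D_2)$; hence $|D_1| + |D_2| - |D_1 \cap \pi^{-1}(D_2)| \geq \gamma(G) = |D| - 1$, giving $|D_1 \cap \pi^{-1}(D_2)| \leq 1$. In the generic case where both $D_1$ and $D_2$ are non-empty, connectedness of $D$ forces at least one $M_\pi$-edge inside $D$, so $D_1 \cap \pi^{-1}(D_2) = \{v\}$ for a unique vertex $v$. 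I would then set $A_1 := D_1$ and $A_2 := \pi^{-1}(D_2) \setminus \{v\}$: conditions (1) and (3) of the proposition follow directly from $D$ dominating $V$ and $V'$ respectively, while conditions (2) and (4) follow from the structural fact that $v\pi(v)'$ is the only edge of $\pi G[D]$ joining $D_1$ to $D_2'$, so connectedness of $\pi G[D]$ forces both $G[D_1]$ and $G'[D_2']$ to be connected.

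The main obstacle is the edge case where $D_1$ or $D_2$ is empty. Say $D_2 = \emptyset$; then $D \subseteq V$ can dominate $V'$ only via $M_\pi$-edges, forcing $\pi(D_1) = V_G$ and hence $D_1 = V_G$, so that $\gamma(G) = |V_G| - 1$. For connected $G$ with $|V_G| \geq 2$, Ore's bound $\gamma(G) \leq |V_G|/2$ then forces $|V_G| \leq 2$; combined with $\gamma_{con}(G) \neq |V_G|$, this leaves only $G = K_2$, for which I would verify the decomposition $A_1 = \{v\}$, $A_2 = \emptyset$ directly for either vertex $v \in V_G$. The case $D_1 = \emptyset$ is symmetric.
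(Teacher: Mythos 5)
Your proof is correct and takes essentially the same route as the paper's: the backward direction instantiates Observation \ref{obs:connected} with the same partition $A_1\setminus\{v\},\{v\},A_2$ and closes with Lemma \ref{Lplus1}, while the forward direction extracts $A_1=D_1$, $A_2=\pi^{-1}(D_2)\setminus\{v\}$ and uses the same counting argument to force $|D_1\cap\pi^{-1}(D_2)|\leq 1$ and the same single-crossing-edge argument for connectedness. Your explicit handling of the degenerate case $D_1=\emptyset$ or $D_2=\emptyset$ (via Ore's bound and the hypothesis $\gamma_{con}(G)\neq|V_G|$) is a detail the paper's proof passes over silently.
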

\begin{proof}
Let $D=A_1\cup(\pi(A_2\cup \{v\}))'$ be a set of vertices in $\pi G.$
It follows from Observation \ref{obs:connected} that $D$ is a connected dominating set of size $\gamma(G)+1$ in $\pi G$, so $\gamma_c(\pi G)\leq\gamma(G)+1$. By Lemma \ref{Lplus1}, $\gamma_c(\pi(G))\geq\gamma(G)+1$. Thus, $\gamma_c(\pi G)=\gamma(G)+1.$

Now let $D$ be a connected dominating set of size $\gamma(G)+1$ in $\pi G$ and let $D_1=D\cap V$ and $D_2'=D\cap V'.$ 

Since $D$ dominates $V\cup V',$ it is necessary that $D_1\succ V-(\pi^{-1}(D_2)-\{v\})$ and $D_2\succ V-\pi(D_1).$ 

Now let $A_1=D_1$ and $A_2=\pi^{-1}(D_2)-D_1$. If $\left|A\right|>\gamma(G),$ then $\left|D\right|>\gamma(G)+1$, a contradiction. Thus, $A$ is a $\gamma$-set, $A_1\succ V_G-A_2$ and $\pi(A_2\cup \{v\})\succ V_G-\pi(A_1).$

Since $D$ is connected, there exists a vertex $v\in D_1$ such that $\pi(v)\in D_2.$

Suppose $\left|D_1\cap D_2\right|>1.$ Then $A=D_1\cup\pi^{-1}(D_2)$ is a dominating set and $\left|A\right|<\left|D\right|-1.$ This implies $\left|D\right|>\gamma(G)+1.$

Finally, if $A_1$ is not connected, then there exists a vertex $v_i$ in each of its components such that $\pi(v_i)\in \pi(A_2\cup \{v\})$, as otherwise the set $D$ would not be connected. Then $\left|D\right|>\gamma(G)+1$. Similarly, if $\pi(A_2\cup\pi(v))$ is not connected, then $\left|D\right|>\gamma(G)+1$.
\end{proof}

Again, the above is not true for convex domination, however, in Section \ref{sec:idG} we prove a related property of weakly convex dominating sets.

\section{Generalizing some properties of convex domination}

Convex domination in prism graphs was studied by Lema\'nska and Zuazua in~\cite{LZ}, where they prove the following theorem.

\begin{theorem} [\cite{LZ}]
\label{Tconvexfixer}
Let $G$ be a connected graph. If \\$\gamma_{con}(G)=\left|V_G\right|$ and $\diam G\leq 2,$ then $\gamma_{con}(G)=\gamma_{con}(\pi G)$ for every permutation $\pi$ of $V_G$.
\end{theorem}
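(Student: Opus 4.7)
\medskip\noindent\textbf{Proof plan.}
The plan is to prove matching upper and lower bounds showing $\gamma_{con}(\pi G) = |V_G|$, the upper bound being straightforward and the lower bound carrying the substantive content. A key auxiliary observation, used throughout, is that when $\diam G \leq 2$, any $u$-$v$ path in $\pi G$ with $u, v \in V$ that visits $V'$ must cross the matching $M_\pi$ at least twice and hence has length at least $3$; since $d_G(u,v)\leq 2$, every shortest $u$-$v$ path in $\pi G$ lies entirely in $V$ (and analogously for $V'$).

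\emph{Upper bound.} This observation shows that $V$ is convex in $\pi G$, and dominance is clear since every $v'\in V'$ is adjacent to $\pi^{-1}(v)\in V$. Hence $\gamma_{con}(\pi G)\leq |V_G|$.

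\emph{Lower bound.} Let $D$ be any convex dominating set of $\pi G$ and set $D_1 = D\cap V$, $D_2' = D\cap V'$. By the same shortest-path observation, $D_1$ and $D_2$ are convex in $G$, so by the hypothesis $\gamma_{con}(G)=|V_G|$ each of them is either equal to $V_G$ or fails to dominate $G$. The degenerate cases give $|D|\geq |V_G|$ at once: if $D_1=\emptyset$ then each $v\in V$ can only be dominated via $\pi(v)'\in D$, forcing $D_2 = V_G$; the case $D_2=\emptyset$ is symmetric; and if $D_1=V_G$ or $D_2=V_G$ the bound is immediate.

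\emph{Main case.} The substantive case is when $D_1$ and $D_2$ are both proper and non-empty, hence both non-dominating in $G$. Picking any $x\in V_G\setminus N_G[D_1]$, the only $\pi G$-neighbor of $x$ that can lie in $D$ is $\pi(x)'$, so $\pi(x)\in D_2$. For an arbitrary $u\in D_1$ I will examine $d_{\pi G}(u,\pi(x)')$. The crucial step --- and what I expect to be the main obstacle --- is ruling out the possibility $\pi(u)\pi(x)\notin E_G$: in that case one checks that $d_{\pi G}(u,\pi(x)')=3$, and for any common neighbor $a\in N_G(u)\cap N_G(x)$ (which exists because $d_G(u,x)=2$) the path $u\to a\to x\to\pi(x)'$ is a shortest $u$-$\pi(x)'$ path, so convexity of $D$ would force $x\in D_1$, contradicting $x\notin N_G[D_1]$. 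Hence $\pi(u)\pi(x)\in E_G$, which gives $d_{\pi G}(u,\pi(x)')=2$ via the unique shortest path $u\to\pi(u)'\to\pi(x)'$, and convexity of $D$ then yields $\pi(u)\in D_2$. This proves $\pi(D_1)\subseteq D_2$, and a symmetric argument using $y\in V_G\setminus N_G[D_2]$ yields $\pi^{-1}(D_2)\subseteq D_1$. Combining, $\pi(D_1)=D_2$, and the chosen $x$ then satisfies $\pi(x)\in D_2=\pi(D_1)$, forcing $x\in D_1$, a contradiction. Hence $|D|\geq |V_G|$ in every case, completing the proof.
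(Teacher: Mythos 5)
Your proof is correct and, since the paper states this theorem as a citation from \cite{LZ} without reproducing the argument, it essentially reconstructs that proof from the ingredients the paper surveys: the upper bound is Theorem~\ref{Tdiam2V}(1), the degenerate cases are Proposition~\ref{PD1D2}(1), and your main case derives $\pi(D_1)\subseteq D_2$ and $\pi^{-1}(D_2)\subseteq D_1$ (the hypotheses under which \cite{LZ} conclude that $D_2$, respectively $D_1$, is itself a convex dominating set of $G$) directly from the shortest-path structure of the prism. The step you rightly flag as the crux --- ruling out $\pi(u)\pi(x)\notin E_G$ via the length-$3$ geodesic through a common neighbour of $u$ and $x$ --- is exactly where full convexity rather than weak convexity is used, consistent with the paper's remark that this theorem does not extend to $\gamma_{wcon}$.
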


The proof of Theorem \ref{Tconvexfixer} relies some properties of convex dominating sets, which they prove in the same paper. We will now compare these properties with those of weakly convex dominating sets.

\begin{theorem} [\cite{LZ}]
\label{Tdiam2V}
For any connected graph $G$:
\begin{enumerate}
\item If $\diam(G)\leq 2,$ then $V$ and $V'$ are both convex dominating sets of $\pi G$ for any permutation $\pi.$
\item If $\diam(G)\geq 3,$ then there exist permutations $\pi_1$ and $\pi_2$ such that $V$ is not a convex dominating set of $\pi_1G$ and $V'$ is not a convex dominating set of $\pi_2G.$
\end{enumerate}
\end{theorem}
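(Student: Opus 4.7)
My plan is to treat the two parts separately, since they require different techniques.

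For part (1), I would first check domination: each $x' \in V'$ is matched with $\pi^{-1}(x) \in V$, giving $V \succ V'$. For convexity, I would fix $u,v \in V$ and note $d_{\pi G}(u,v) \le d_G(u,v) \le 2$. A length-$1$ shortest path trivially lies in $V$; for a length-$2$ path $u$--$w$--$v$ with $w \in V'$, the unique neighbor of $u$ in $V'$ is $\pi(u)'$ and likewise the unique neighbor of $v$ is $\pi(v)'$, so $\pi(u)=\pi(v)$, forcing $u=v$, a contradiction. Hence every shortest $u$--$v$ path stays in $V$, and the same argument with $\pi^{-1}$ handles $V'$.

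For part (2), I would construct $\pi_1$ explicitly. Pick $u,v \in V$ with $d_G(u,v) \ge 3$, which exist since $\diam G \ge 3$, and pick any edge $ab$ of $G$. Define $\pi_1$ by $\pi_1(u)=a$ and $\pi_1(v)=b$, extended arbitrarily to a bijection; this is possible because $u \ne v$ and $a \ne b$. Then $u - a' - b' - v$ is a valid path of length $3$ in $\pi_1 G$ passing through $V'$. To conclude that $V$ is not convex I would show this path is a shortest one: rule out length $1$ using $uv \notin E_G$; for length $2$, a middle vertex in $V$ would give $u,v$ a common $G$-neighbor, and a middle vertex in $V'$ would have to equal both $\pi_1(u)'$ and $\pi_1(v)'$, contradicting $a \ne b$. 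A mirror construction, imposing $\pi_2^{-1}(u)=a,\pi_2^{-1}(v)=b$, produces a length-$3$ shortest path $u' - a - b - v'$ through $V$ between two vertices of $V'$, so $V'$ is not convex in $\pi_2 G$.

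I do not foresee a genuine obstacle: the lower bound $d_{\pi_1 G}(u,v) \ge 3$ depends only on $\pi_1(u)$ and $\pi_1(v)$, so the extension of $\pi_1$ is unconstrained. The only mild bookkeeping is the overlap between $\{u,v\}$ and $\{a,b\}$, but since $uv \notin E_G$ while $ab \in E_G$ we have $\{u,v\} \ne \{a,b\}$, and an injective partial map on two points always extends to a bijection. The conceptual heart of the proof is the simple observation that forcing $\pi(u) \sim \pi(v)$ in $G$ creates a length-$3$ shortcut in $\pi G$ via $V'$, which is always achievable whenever $G$ has an edge.
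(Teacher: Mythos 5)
Your proof is correct. The paper itself states this theorem as a citation from \cite{LZ} and gives no proof, but your argument is the natural one and runs parallel to the paper's proof of the weakly convex analogue (Theorem \ref{Tdiam3V}): the only genuinely new point in the convex setting is that in part (2) the manufactured length-3 path $u$--$\pi_1(u)'$--$\pi_1(v)'$--$v$ through $V'$ need only tie, not strictly beat, $d_G(u,v)$ in order to violate convexity, which is exactly why $\diam(G)\geq 3$ suffices here while the weakly convex version requires $\diam(G)\geq 4$.
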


It follows that if $\diam(G)\leq 2,$ then $\gamma_{con}(\pi G)\leq\left|V_G\right|$ for any $\pi$. The weakly convex domination number has a similar property.

\begin{theorem}
\label{Tdiam3V}
Every connected graph $G$ has the following properties:
\begin{enumerate}
\item If $\diam(G)\leq 3,$ then $V$ and $V'$ are weakly convex dominating sets of $\pi G$ for every permutation $\pi;$
\item If $\diam(G)>3$, then there exist permutations $\pi_1$ and $\pi_2$ such that $V$ is not a weakly convex dominating set of $\pi_1G$ and $V'$ is not a weakly convex dominating set of $\pi_2G.$
\end{enumerate}
\end{theorem}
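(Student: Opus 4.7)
The plan is to prove (1) by establishing the stronger fact that, when $\diam(G)\le 3$, distances inside $V$ are preserved in $\pi G$; that is, $d_{\pi G}(u,v)=d_G(u,v)$ for all $u,v\in V$. Granted this, any shortest $u$--$v$ path in $G$ remains a shortest $u$--$v$ path in $\pi G$ and lies entirely inside $V$, witnessing the weak convexity of $V$; and $V$ dominates $V'$ because every $v'\in V'$ is joined to $\pi^{-1}(v)\in V$ by a matching edge. Applying the same argument with the roles of $G$ and $G'$ interchanged (using $\pi^{-1}$ in place of $\pi$) handles $V'$.

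To prove the distance-preservation claim I would split on $d_G(u,v)\in\{1,2,3\}$. The upper bound $d_{\pi G}(u,v)\le d_G(u,v)$ is immediate. For the matching lower bound, the only nontrivial case is $d_G(u,v)=3$, where I must rule out a length-two shortcut $u,x,v$ in $\pi G$. If $x\in V$, then $ux,xv\in E_G$ and $d_G(u,v)\le 2$, contradicting the assumption. If $x\in V'$, observe that each vertex of $V'$ is incident to exactly one matching edge; hence $x=\pi(u)'=\pi(v)'$, which forces $u=v$, again a contradiction. The cases $d_G(u,v)\le 2$ are handled analogously and are even easier.

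For part (2), assume $\diam(G)\ge 4$ and pick $u,v\in V$ with $d_G(u,v)=\diam(G)$ together with any edge $ab\in E_G$. Since $u\ne v$ and $a\ne b$, the assignments $\pi_1(u)=a$ and $\pi_1(v)=b$ extend to a permutation of $V_G$; the possibility $\{a,b\}=\{u,v\}$ is excluded because it would force $uv\in E_G$, contradicting $d_G(u,v)\ge 4$, so no inconsistency arises even when $\{a,b\}\cap\{u,v\}\ne\emptyset$. The walk $u,a',b',v$ in $\pi_1 G$ uses two matching edges and one edge of $G'$, so $d_{\pi_1 G}(u,v)\le 3<d_G(u,v)$, while every $u$--$v$ path inside $V$ has length at least $d_G(u,v)\ge 4$. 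Hence no shortest $u$--$v$ path of $\pi_1 G$ lies in $V$, so $V$ fails to be weakly convex. Symmetrically, taking $\pi_2$ with $\pi_2(a)=u$ and $\pi_2(b)=v$ yields the length-three path $u',a,b,v'$ in $\pi_2 G$ between $u',v'\in V'$, showing that $V'$ is not weakly convex in $\pi_2 G$.

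The main obstacle is the distance-preservation lower bound in part (1), specifically excluding length-two shortcuts through $V'$ when $d_G(u,v)=3$. The one-matching-edge-per-$V'$-vertex observation collapses this to $\pi(u)=\pi(v)$, which is impossible; once that step is in place, everything else is bookkeeping parallel to the proof of Theorem~\ref{Tdiam2V}.
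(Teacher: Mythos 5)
Your proposal is correct and follows essentially the same route as the paper: part (1) rests on the observation that any $u$--$v$ detour through the other copy costs at least $3$ (each vertex has a unique matching neighbour), so distances within $V$ are preserved when $\diam(G)\le 3$; part (2) uses the same construction of mapping a diametral pair at distance $\ge 4$ onto the endpoints of an edge to create a length-$3$ shortcut. The only cosmetic differences are your explicit case split on $d_G(u,v)$ and the (harmless, unnecessary) aside about $\{a,b\}=\{u,v\}$.
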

\begin{proof}
Obviously, for any permutation $\pi,$ $V$ and $V'$ are dominating sets of~$\pi G.$ For any pair of vertices $u,v\in V,$ the shortest $u$--$v$ path containing at least one vertex from $V'$ has length at least $3.$ Thus, if $\diam(G)\leq 3,$ then $d_G(u,v)=d_{\pi G}(u,v)$ and $V$ contains at least one shortest $u$--$v$ path in $\pi G.$ Similarly, if $\diam(G)\leq 3,$ then $V'$ must contain a shortest $u'$--$v'$ path in $\pi G$ for every $u',v'\in V'.$

Now, let $\diam(G)$ be at least $4$ and let $u,v\in V$ be a pair of vertices such that $d_G(u,v)\geq 4.$ If $\pi_1$ is a permutation such that $\pi_1(u)$ and $\pi_1(v)$ are adjacent, then $d_{\pi G}(u,v)=3<d_G(u,v)$. Thus, $V$ is not a weakly convex dominating set in $\pi_1G.$ Similarly, for $\pi_2=\pi_1^{-1}$, the set $V'$ is not weakly convex.
\end{proof}

The first part of Theorem \ref{Tdiam3V} implies that $\gamma_{wcon}(\pi G)\leq \left|V_G\right|$ for any graph $G$ with diameter at most $3.$

\begin{prop}[\cite{LZ}]
\label{PD1D2}
For a connected graph $G$ and permutation $\pi$ of $V_G$, let $D$ be a convex dominating set of $\pi G$ and let $D_1=D\cap V$ and $D_2'=D\cap V'.$ Then $D$ has the following properties:
\begin{enumerate}
\item If $\left|D\right|<\left|V_G\right|$, then $D_1\neq\emptyset$ and $D_2'\neq\emptyset;$
\item If $D_1\neq\emptyset$ and $D_2'\neq\emptyset,$ then there exists at least one $x\in D_1$ such that $\pi(x)\in D_2$.
\end{enumerate}
\end{prop}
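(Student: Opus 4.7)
The plan is to prove both parts by direct contradiction, relying on the structure of the matching $M_\pi$ between $V$ and $V'$.

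For part (1), I would suppose toward contradiction that $D_1 = \emptyset$, so $D \subseteq V'$. Every vertex $v \in V$ must then be dominated by $D$, but the only edges between $V$ and $V'$ are the matching edges of $M_\pi$, so the unique neighbor of $v$ in $V'$ is $\pi(v)'$. Hence $\pi(v)' \in D$ for every $v \in V$, which forces $D \supseteq V'$ and $|D| \geq |V_G|$, contradicting $|D| < |V_G|$. The case $D_2' = \emptyset$ is symmetric: applying the same argument in $\pi^{-1}G$ (or noting that the only neighbor of $v' \in V'$ in $V$ is $\pi^{-1}(v)$), we conclude that $D$ must contain all of $V$, again contradicting $|D| < |V_G|$.

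For part (2), I would use convexity directly. Pick any $x \in D_1$ and $y' \in D_2'$ (possible by hypothesis) and fix a shortest $x$--$y'$ path $P$ in $\pi G$. Since $P$ starts in $V$ and ends in $V'$, and since the only edges joining $V$ and $V'$ are those of $M_\pi$, the path $P$ must contain at least one matching edge. Let $u\pi(u)'$ be such an edge lying on $P$, with $u \in V$ and $\pi(u)' \in V'$. Because $D$ is convex, every vertex of $P$ lies in $D$; in particular $u \in D \cap V = D_1$ and $\pi(u)' \in D \cap V' = D_2'$, so $\pi(u) \in D_2$. Taking $x := u$ yields the desired vertex.

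There is no serious obstacle here: the only point requiring a brief justification is that any $x$--$y'$ path necessarily uses a matching edge, which follows immediately from the fact that $M_\pi$ is the only set of edges crossing between $V$ and $V'$. Note that this argument uses the full strength of convexity (all vertices on \emph{every} shortest path lie in $D$), not just weak convexity; as the subsequent sections of the paper suggest, the analogue for $\gamma_{wcon}$ will require a different treatment.
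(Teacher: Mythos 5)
Your proof is correct, and both parts follow essentially the argument the paper itself uses (the paper cites this proposition from Lema\'nska--Zuazua without proof, but proves the generalized version for connected dominating sets, Proposition~\ref{PD1D2connected}, by exactly your route: uniqueness of the matching neighbor for part (1), and the fact that any $V$--$V'$ path must cross a matching edge for part (2)). One remark on your closing comment: the argument does \emph{not} need the full strength of convexity. You only use that the single chosen shortest $x$--$y'$ path lies in $D$, which weak convexity already guarantees; in fact mere connectedness of $D$ suffices, since any $x$--$y'$ path inside $D$ (shortest or not) must use a matching edge. This is precisely why the paper is able to state the identical two properties for connected and weakly convex dominating sets (Proposition~\ref{PD1D2connected} and Corollary~\ref{CD1D2weak}), so the analogue for $\gamma_{wcon}$ does not require a different treatment.
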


The above can be generalized to weakly convex and connected domination.

\begin{prop}
\label{PD1D2connected}
For a connected graph $G$ and permutation $\pi$ of $V_G$, let $D$ be a connected dominating set of $\pi G$ and let $D_1=D\cap V$ and $D_2'=D\cap V'.$ Then $D$ has the following properties:
\begin{enumerate}
\item If $\left|D\right|<\left|V_G\right|$, then $D_1\neq\emptyset$ and $D_2'\neq\emptyset;$
\item If $D_1\neq\emptyset$ and $D_2'\neq\emptyset,$ then there exists at least one $x\in D_1$ such that $\pi(x)\in D_2$.
\end{enumerate}
\end{prop}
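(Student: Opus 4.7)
The plan is to adapt the proof of Proposition \ref{PD1D2} (the convex version) to the connected setting. In both parts the convexity hypothesis is never really used; what matters is the structure of the edge set of $\pi G$, namely that the only edges between $V$ and $V'$ are the matching edges $\{u\pi(u)' : u \in V_G\}$. So I would expect each part to go through with essentially the same argument, replacing ``convex'' by ``connected'' where needed.

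For part (1) I would proceed by contradiction. Assume $D_2' = \emptyset$, so $D \subseteq V$. Then every vertex $w' \in V'$ must be dominated by $D$ via a matching edge, since these are the only edges from $V$ to $V'$. The unique neighbor of $w'$ in $V$ is $\pi^{-1}(w)$, so $\pi^{-1}(w)$ must lie in $D_1$ for every $w \in V_G$. This forces $D_1 = V$ and hence $|D| = |V_G|$, contradicting $|D| < |V_G|$. The case $D_1 = \emptyset$ is symmetric, using $\pi$ instead of $\pi^{-1}$.

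For part (2) I would again argue by contradiction. Suppose $D_1 \neq \emptyset$ and $D_2' \neq \emptyset$, but no $x \in D_1$ satisfies $\pi(x) \in D_2$. Then no matching edge of $\pi G$ has both endpoints in $D$, so in the induced subgraph $\pi G[D]$ there is no edge between the nonempty sets $D_1 \subseteq V$ and $D_2' \subseteq V'$. Since the only edges of $\pi G$ connecting $V$ to $V'$ are the matching edges, $D_1$ and $D_2'$ lie in different components of $\pi G[D]$, contradicting the connectedness of $D$.

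I do not anticipate a real obstacle here; the content of the proposition is essentially a bookkeeping observation about the bipartite structure of the matching $M_\pi$. The only point that needs a bit of care is stating cleanly in part (1) that the only neighbours of $w' \in V'$ lying in $V$ are given by $\pi^{-1}$, so that the step ``$D$ dominates $V'$ through $V$ alone'' translates precisely into $\pi(D_1) = V_G$.
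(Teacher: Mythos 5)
Your proposal is correct and follows essentially the same route as the paper: part (1) is the same counting argument that each vertex of one side can dominate only its unique matching-partner on the other side, and part (2) is the same observation that a connected set meeting both $V$ and $V'$ must use a matching edge (you phrase it via components of the induced subgraph, the paper via a $v_1$--$v_2'$ path, which is the same fact). No gaps.
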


\begin{proof}
If $D_1=\emptyset,$ then $D=D_2'$. But every vertex in $D_2'$ only dominates one vertex in $V.$ It follows that if $D$ is a dominating, then $D_1=\emptyset$ implies $\left|D\right|=\left|V\right|$. The same reasoning applies if $D_2'=\emptyset.$ Thus, if $\left|D\right|<\left|V\right|,$ then $\emptyset\notin\{D_1,D_2'\}.$

If both sets $D_1$ and $D_2$ are nonempty, then $D$ contains a pair of vertices $v_1\in D_1, v_2'\in D_2'.$ Since the set $D$ is connected, there exists a $v_1$-$v_2'$ path contained entirely in $D$. Each vertex $v\in V$ has only one neighbor in $V',$ namely $\pi(v)'.$ Hence, every path connecting $v_1\in D_1$ with $v_2'\in D_2'$ contains a pair of vertices $x\in V$ and $\pi(x)'\in V'.$ This shows that $D_1$ contains a vertex $x$ such that $\pi(x)\in D_2.$ 
\end{proof}

Since every weakly convex dominating set is a connected dominating set, we also have the following.

\begin{cor}
\label{CD1D2weak}
For a connected graph $G$ and permutation $\pi$ of $V_G$, let $D$ be a connected dominating set of $\pi G$ and let $D_1=D\cap V$ and $D_2'=D\cap V'.$ Then $D$ has the following properties:
\begin{enumerate}
\item If $\left|D\right|<\left|V_G\right|$, then $D_1\neq\emptyset$ and $D_2'\neq\emptyset;$
\item If $D_1\neq\emptyset$ and $D_2'\neq\emptyset,$ then there exists at least one $x\in D_1$ such that $\pi(x)\in D_2$.
\end{enumerate}
\end{cor}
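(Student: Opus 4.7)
The plan is to observe that the corollary follows immediately from Proposition~\ref{PD1D2connected} together with the containment of classes noted just before the statement. Indeed, a weakly convex set is, by definition, connected (for any two of its vertices it contains a whole shortest path between them, hence in particular a path between them). Thus every weakly convex dominating set of $\pi G$ is a connected dominating set of $\pi G$, and the hypotheses on $D$ in the corollary are a special case of the hypotheses on $D$ in Proposition~\ref{PD1D2connected}. (I read the phrase ``connected dominating set'' in the statement of Corollary~\ref{CD1D2weak} as a typo for ``weakly convex dominating set'', since the sentence introducing the corollary makes the weakly convex case the point.)

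With this observation in hand, the two conclusions are transferred verbatim. For part~(1), if $|D|<|V_G|$ then $D$, being weakly convex dominating and hence connected dominating, satisfies the first conclusion of Proposition~\ref{PD1D2connected}, so $D_1\neq\emptyset$ and $D_2'\neq\emptyset$. For part~(2), assuming $D_1\neq\emptyset$ and $D_2'\neq\emptyset$, the same proposition supplies some $x\in D_1$ with $\pi(x)\in D_2$.

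There is no real obstacle here; the whole point of presenting the result as a corollary is that nothing beyond the inclusion ``weakly convex $\Rightarrow$ connected'' is needed. If one preferred a self-contained argument, one could simply repeat the proof of Proposition~\ref{PD1D2connected} using the existence of a $v_1$--$v_2'$ path inside $D$ guaranteed by weak convexity, but there is nothing to gain by doing so.
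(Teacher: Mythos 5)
Your proposal is correct and matches the paper exactly: the corollary is stated with no separate proof, justified only by the remark that every weakly convex dominating set is a connected dominating set, so Proposition~\ref{PD1D2connected} applies verbatim. Your reading of ``connected dominating set'' in the corollary's statement as a slip for ``weakly convex dominating set'' is also the natural one given the introductory sentence.
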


\begin{lemma}[\cite{LZ}]
\label{LD1D2}
Let $G$ be a connected graph in which $\diam(G)\leq 2$. Let $D$ be a convex dominating set of $\pi G$ and let $D_1=D\cap V$ and $D_2'=D\cap V'.$ Then the set $D$ has the following properties:
\begin{enumerate}
\item If $\pi(D_1)\subseteq D_2$, then $D_2$ is a convex dominating set of $G;$
\item If $\pi^{-1}(D_2)\subseteq D_1$, then $D_1$ is a convex dominating set of $G.$
\end{enumerate}
\end{lemma}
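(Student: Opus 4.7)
The plan is to handle part (1) directly and then derive part (2) by symmetry, applying part (1) to $\pi^{-1}$ (since the roles of $V$ and $V'$ are exchanged by this symmetry, converting the hypothesis $\pi^{-1}(D_2)\subseteq D_1$ into the analog of $\pi(D_1)\subseteq D_2$). So the real work is proving that, under $\pi(D_1)\subseteq D_2$, the set $D_2$ is both dominating in $G$ and convex in $G$.

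For the domination step, I would fix $v\in V_G$ and analyze how the copy $v'\in V'$ is dominated by $D$ in $\pi G$. There are three cases: $v'\in D_2'$ (then $v\in D_2$); $v'$ has a neighbour $u'\in D_2'$ inside $G'$ (then $uv\in E_G$ with $u\in D_2$, so $v\in N_G[D_2]$); or $v'$ is dominated through the matching edge by $\pi^{-1}(v)\in D_1$, in which case $v=\pi(\pi^{-1}(v))\in\pi(D_1)\subseteq D_2$ by hypothesis. In every case $v\in N_G[D_2]$, which is exactly what we need. The hypothesis $\pi(D_1)\subseteq D_2$ is used precisely to neutralize the third case; without it, a vertex could be ``only'' dominated across the prism and we would lose control in $G$.

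For convexity I would exploit the diameter hypothesis. Take $u,v\in D_2$; any shortest $u$--$v$ path in $G$ has length at most $2$, so it suffices to consider a two-step path $u\text{--}w\text{--}v$ and show $w\in D_2$. The key observation is that, since $\diam(G)\leq 2$, one has $d_{\pi G}(u',v')=d_G(u,v)$: any $u'$--$v'$ path in $\pi G$ passing through $V$ must use at least two matching edges and therefore has length at least $3$, so shortest $u'$--$v'$ paths in $\pi G$ stay in $G'$. Thus $u'\text{--}w'\text{--}v'$ is a shortest $u'$--$v'$ path in $\pi G$; since $D$ is convex and $u',v'\in D$, we get $w'\in D$, i.e.\ $w\in D_2$. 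This establishes convexity of $D_2$ in $G$.

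The main obstacle, as I see it, is to clearly isolate the role of the diameter bound: it is needed twice, once to ensure that the short $G'$-paths are actually shortest in $\pi G$ (otherwise convexity of $D$ would not force intermediate vertices to lie in $D$), and implicitly to keep the casework in the domination step manageable. Part (2) then reduces to part (1) essentially for free, since $\pi G$ and $\pi^{-1}G$ are isomorphic via the map swapping each vertex with its copy, which interchanges the roles of $D_1,D_2$ and of the two hypotheses.
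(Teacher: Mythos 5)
Your argument is correct, and it follows essentially the same lines as the proof the paper gives for the weakly convex analogue of this lemma (the convex statement itself is quoted from \cite{LZ} without proof here): domination of $D_2$ follows because the only vertices of $V'$ that $D_1$ can dominate lie in $\pi(D_1)'\subseteq D_2'$, and convexity follows because $\diam(G)\leq 2$ forces every shortest $u'$--$v'$ path in $\pi G$ to stay inside $G'$. Your reduction of part (2) to part (1) via the isomorphism between $\pi G$ and $\pi^{-1}G$ is a clean way to package what the paper handles with a symmetric ``similarly'' argument.
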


Again, we can prove a similar property for weakly convex domination.

\begin{lemma}
\label{LD1D2}
Let $G$ be a connected graph in which $\diam(G)\leq 2$. Let $D$ be a weakly convex dominating set of $\pi G$ and let $D_1=D\cap V$ and $D_2'=D\cap V'.$ Then the set $D$ has the following properties:
\begin{enumerate}
\item If $\pi(D_1)\subseteq D_2$, then $D_2$ is a weakly convex dominating set of $G;$
\item If $\pi^{-1}(D_2)\subseteq D_1$, then $D_1$ is a weakly convex dominating set of $G.$
\end{enumerate}

\end{lemma}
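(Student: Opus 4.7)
The plan is to establish (1); statement (2) follows by the symmetric argument (swap the roles of $G$ and $G'$, equivalently replace $\pi$ by $\pi^{-1}$). Assuming $\pi(D_1)\subseteq D_2$, I need to show both that $D_2\succ V_G$ and that $D_2$ is weakly convex in $G$.

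For domination, I would take an arbitrary $v\in V_G$ and ask how its copy $v'\in V'$ is dominated by $D$ in $\pi G$. The neighbours of $v'$ in $\pi G$ are its $G'$-neighbours together with the unique matching partner $\pi^{-1}(v)\in V$. Hence either $v\in D_2$, or $v$ has a $G$-neighbour in $D_2$, or $\pi^{-1}(v)\in D_1$; in the last case the hypothesis gives $v=\pi(\pi^{-1}(v))\in\pi(D_1)\subseteq D_2$. Thus $v\in N_G[D_2]$ and $D_2$ dominates $G$.

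For weak convexity, fix distinct $u,v\in D_2$. A preliminary observation is that $d_{\pi G}(u',v')=d_G(u,v)$: the inequality $\leq$ follows from $G'\subseteq\pi G$, and the reverse holds because if $u,v$ are non-adjacent in $G$ then $u',v'$ are non-adjacent in $G'$, and a matching edge never joins two vertices of $V'$. Since $\diam(G)\leq 2$, this common distance is $1$ or $2$. Weak convexity of $D$ in $\pi G$ then supplies a shortest $u'$--$v'$ path contained in $D$; if it has length $1$, the edge $uv$ itself gives a path inside $D_2$, and if it has length $2$, it has the form $u',w,v'$ with $w\in D$. The crucial step---and the main obstacle of the argument---is to show $w\in V'$. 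If instead $w\in V$, then $w$'s only $V'$-neighbour is $\pi(w)'$, so simultaneous adjacency to both $u'$ and $v'$ would force $u=\pi(w)=v$, contradicting $u\neq v$. Hence $w=z'$ for some $z\in D_2$ adjacent in $G$ to both $u$ and $v$, so $u,z,v$ is a shortest $u$--$v$ path in $G$ contained in $D_2$, which completes the proof.
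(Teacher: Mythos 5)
Your proposal is correct and follows essentially the same route as the paper: domination of $V$ by $D_2$ via the hypothesis $\pi(D_1)\subseteq D_2$, and weak convexity by noting that when $\diam(G)\leq 2$ every shortest $u'$--$v'$ path in $\pi G$ must stay inside $V'$ (hence inside $D_2'$), with statement (2) by symmetry. Your argument is somewhat more explicit than the paper's, in particular in justifying that a middle vertex $w$ of a length-$2$ path cannot lie in $V$ because it has only one neighbour in $V'$, a step the paper states without detailed proof.
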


\begin{proof}
If $\pi(D_1)\in D_2$, then clearly $D_2'$ dominates $V',$ as $D_1$ does not dominate any part of $V'-D_2'.$ It follows that $D_2$ is a dominating set of $G$. 
For any two vertices $u',v'\in D_2'$ the shortest possible $u'$-$v'$-path in $\pi G$ containing a vertex from $V$ has length at least $3$. If $\diam(G)\leq 2,$ then any shortest $u'$-$v'$-path must be contained in $D_2'$. Thus, for every $u,v\in D_2$ the set $D_2$ contains a shortest $u$-$v$-path. Hence, $D_2$ is a weakly convex dominating set of $G$.

Similarly, if $\pi^{-1}(D_2)\subseteq D_1$, then $D_1$ is a dominating set of $G$ because $D_2'$ does not dominate any part of $V-D_1$. $D_1$ is also convex, because for any $u,v\in D_1$ no shortest $u$-$v$-path passes through $D_2'.$
\end{proof}

Note that, unlike Theorem \ref{Tdiam3V}, the above is does not hold for every $G$ such that $\diam(G)\leq 3.$ For example, if $G=P_4=(\{1,2,3,4\},\{12,23,34\})$ and $\pi=(12)(34),$ the set $D=\{1',2,3,4'\}$ is a weakly convex dominating set of $\pi G$. The set $\{1,4\}$ contains $\pi(\{2,3\})$ and it is not a weakly convex set.

Since so many of the results in \cite{LZ} can be generalized to weakly convex domination, one could expect a weakly convex analogue of Theorem \ref{Tconvexfixer} to be true as well. However, Lema\'nska and Zuazua's proof relies on showing that if $G$ is a conncected graph with $diam(G)\leq 2$, then a dominating set of $\pi G$ with less than $\left|V_G\right|$ vertices cannot be convex. This is based on properties of convex sets which weakly convex sets do not have. Thus there appears to be no reason why such a set cannot be weakly convex and therefore the following conjecture seems more likely.

\begin{conj}
There exists a connected graph $G$ with $diam(G) = 2$ and permutation $\pi$ of $V_G$ such that $\gamma_{wcon}(G)=\left|V_G\right|$ and $\gamma_{wcon}(\pi G)<\gamma_{wcon}(G).$
\end{conj}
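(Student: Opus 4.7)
My plan is to address the conjecture directly, but I expect the argument to show that its hypothesis cannot be satisfied, and hence that the conjecture is false as stated. The strategy is to produce a weakly convex dominating set of size less than $|V_G|$ in every connected graph of diameter $2$.

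The key observation is that in any connected graph $G$ with $\diam(G) = 2$, the closed neighborhood $N_G[v]$ of every vertex $v$ is a weakly convex dominating set of $G$. Domination holds because every vertex of $V_G$ is at distance at most $2$ from $v$, hence is adjacent to a vertex of $N_G[v]$. For weak convexity: given $x,y \in N_G[v]$ with $d_G(x,y) \geq 2$, we must have $d_G(x,y) = 2$ by the diameter assumption, forcing $x,y \in N_G(v)$; then $x$-$v$-$y$ is a shortest $x$-$y$ path contained in $N_G[v]$. Pairs at distance at most $1$ are trivial. Consequently
\[
\gamma_{wcon}(G) \leq 1 + \delta(G).
\]

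If $G$ has a universal vertex, then $\gamma_{wcon}(G) = 1$; otherwise $\delta(G) \leq \Delta(G) \leq |V_G| - 2$, and $\gamma_{wcon}(G) \leq |V_G| - 1$. In either case $\gamma_{wcon}(G) < |V_G|$, so the hypothesis $\gamma_{wcon}(G) = |V_G|$ is never realized. The main obstacle to proving the conjecture is therefore not a technical one but the infeasibility of its hypothesis: no diameter-$2$ witness can exist. A natural replacement worth investigating would weaken the diameter assumption to $\diam(G) \geq 3$, under which $N_G[v]$ need not be weakly convex and the corresponding question becomes genuinely open.
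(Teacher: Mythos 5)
This statement is a conjecture, so the paper offers no proof to compare against; what you have produced is not a proof of the statement but a refutation of it, and your refutation is correct. The key observation --- that in any connected graph $G$ with $\diam(G)=2$ the closed neighborhood $N_G[v]$ of an arbitrary vertex $v$ is a weakly convex dominating set --- checks out: domination follows because every vertex at distance $2$ from $v$ has a neighbor in $N_G(v)$, and weak convexity follows because any $x,y\in N_G[v]$ with $d_G(x,y)=2$ must both lie in $N_G(v)$, so the path $x,v,y$ is a shortest $x$--$y$ path inside $N_G[v]$. (Note this is exactly where weak convexity diverges from convexity: the set need only contain \emph{one} shortest path, so the existence of other shortest $x$--$y$ paths outside $N_G[v]$ is harmless, whereas for convex domination it is fatal --- which is why the hypothesis $\gamma_{con}(G)=\left|V_G\right|$ with $\diam(G)\leq 2$ in Theorem \ref{Tconvexfixer} is satisfiable, e.g.\ by $C_4$.) Your degree bookkeeping is also sound: $\diam(G)=2$ forces $G$ to be non-complete, and either a universal vertex gives $\gamma_{wcon}(G)=1$ or else $\delta(G)\leq\left|V_G\right|-2$ gives $\gamma_{wcon}(G)\leq\left|V_G\right|-1$; in both cases $\gamma_{wcon}(G)<\left|V_G\right|$, so the two hypotheses $\diam(G)=2$ and $\gamma_{wcon}(G)=\left|V_G\right|$ are jointly unsatisfiable and the conjecture as stated is false. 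This is a genuine finding that the paper's discussion misses: the author's heuristic argument for the conjecture (that weakly convex sets lack the properties used in the Lema\'nska--Zuazua proof) addresses only whether a small weakly convex dominating set of $\pi G$ could exist, and overlooks that the premise $\gamma_{wcon}(G)=\left|V_G\right|$ already fails for every diameter-$2$ graph. Your closing suggestion to reformulate the question for larger diameter is the right direction, though note that by Theorem \ref{Tdiam3V} the case $\diam(G)=3$ still gives $\gamma_{wcon}(\pi G)\leq\left|V_G\right|$ for all $\pi$, so a meaningful reformulation would need some care about which hypothesis to relax.
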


\section{Convex and weakly convex domination in $\id G$}
\label{sec:idG}

We now consider the special case where the permutation $\pi=\id.$ A graph $G$ is called a \emph{prism fixer} if $\gamma(\id G)=\gamma(G)$ and a \emph{prism doubler} if $\gamma(\id G)=2\gamma(G).$ A~graph $G$ is called a \emph{universal fixer} if $\gamma(\pi G)=\gamma(G)$ for every permutation $\pi$ of $V_G$ and a \emph{universal doubler} if $\gamma(\pi G)=2\gamma(G)$ for every $\pi.$ Prism fixers are characterized in \cite{MX} and universal fixers in \cite{MR} and \cite{KW}. Prism doublers and universal doublers are studied in \cite{BMW}.

Similarly, a graph $G$ such that $\gamma_{con}(\id G)=\gamma_{con}(G)$ is called a \emph{prism $\gamma_{con}$-fixer} and a graph with $\gamma_{con}(\id G)=2\gamma_{con}(G)$ is called a \emph{prism $\gamma_{con}$-doubler}. A universal $\gamma_{con}$-fixer is a graph such that for every $\pi$ $\gamma_{con}(\pi G)=\gamma_{con}(G)$ and a universal $\gamma_{con}$-doubler is a graph such that $\gamma_{con}(\pi G)=2\gamma_{con}(G)$ for every $\pi.$

We begin this section by studying some properties of convex and weakly convex sets in $\id G.$

\begin{lemma}
\label{L1wcon}
If $S\subset V_G$ is a weakly convex set in $G$, then $S,S'$ and $S\cup S'$ are weakly convex sets in $\id G.$
\end{lemma}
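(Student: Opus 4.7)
The plan is to reduce everything to a clean distance comparison between $G$ and $\id G$. Concretely, I would first prove two short auxiliary facts about $\id G$.

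\textbf{Step 1 (distance lemma for same-side pairs).} For any $u,v \in V$, I claim $d_{\id G}(u,v) = d_G(u,v)$, and moreover every shortest $u$--$v$ path in $\id G$ lies entirely in $V$. The argument: any $u$--$v$ walk in $\id G$ that uses a vertex of $V'$ must leave $V$ via some matching edge $xx'$ and return via some $yy'$, so it contains at least two matching edges, and the portion inside $V'$ projects to a walk of the same length in $G$ (since $G' \cong G$ and $\id$ identifies copies). Hence such a walk is at least $2$ longer than the projected $u$--$v$ walk in $G$, and cannot be shortest. The same statement holds for $V'$ by symmetry.

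\textbf{Step 2 (distance lemma for cross pairs).} For $u \in V$ and $w' \in V'$, I claim $d_{\id G}(u,w') = d_G(u,w)+1$, and every shortest $u$--$w'$ path has the form: a shortest $u$--$x$ path inside $V$, followed by the matching edge $xx'$, followed by a shortest $x'$--$w'$ path inside $V'$, where $x$ lies on some shortest $u$--$w$ path in $G$. This follows because any $u$--$w'$ path must cross between $V$ and $V'$ an odd number of times, and each extra crossing beyond the first costs two matching edges.

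\textbf{Step 3 (apply to $S$, $S'$, $S \cup S'$).} With these two facts the three assertions follow almost immediately. For $S$: given $u,v \in S$, weak convexity of $S$ in $G$ provides a shortest $u$--$v$ path $P \subseteq S$ in $G$; by Step~1 this $P$ is also a shortest $u$--$v$ path in $\id G$, and it sits inside $S$. The argument for $S'$ is identical. For $S \cup S'$ the only new case is $u \in S$, $w' \in S'$ with $w \in S$; I pick any shortest $u$--$w$ path $P \subseteq S$ in $G$ (exists by weak convexity of $S$) and take the $\id G$-path that traverses $u$, then the matching edge $uu'$, then the image $P'$ of $P$ inside $V'$. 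This path has length $1 + d_G(u,w) = d_{\id G}(u,w')$ by Step~2, and it is contained in $S \cup S'$.

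I do not foresee a real obstacle; the only technical point is being careful in Step~1 and Step~2 that ``projecting'' a path through $V'$ back to $G$ really does give a walk of the same length, which uses nothing more than the fact that $\pi = \id$ identifies $v$ with $v'$ so that $G'$ is a literal copy of $G$ attached by a perfect matching whose endpoints share names. Once the distance structure of $\id G$ is pinned down, weak convexity transfers for free.
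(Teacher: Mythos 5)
Your proposal is correct and follows essentially the same route as the paper: establish that $d_{\id G}(u,v)=d_G(u,v)$ for same-side pairs and $d_{\id G}(u,w')=d_G(u,w)+1$ for cross pairs, then transfer a shortest path contained in $S$ (inserting a single matching edge in the cross case). The only cosmetic difference is that you cross the matching at $u$ while the paper crosses at an intermediate vertex $v_i$, and you spell out the distance facts the paper dismisses as ``easy to see.''
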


\begin{proof}
Let $u$ and $v$ be any two vertices in $S$. Since the set $S$ is weakly convex, it contains a shortest $u$--$v$ path in $G$. It is easy to see that $d_{\id G}(u,v)=d_G(u,v)$. Thus $S$ also contains a shortest $u$--$v$ path in $\id G$. Since the same applies to $u',v'\in S',$ the set $S'$ is also weakly convex. 

Now let $u=v_0, v_1,...,v_i,..., v_k=v$ be a shortest $u$--$v$ path in $G$ and let $v_i,...,v_{k-1}\in S$. The path $u=v_0, v_1,...,v_i,$ $v_i',v_{i+1}',..., v_k'=v'$ is a shortest $u$--$v'$ path in $\id G$ contained in $S\cup S'$. Thus $S\cup S'$ is a weakly convex set in $\id G.$
\end{proof}

The same applies to convex sets.

\begin{lemma}
\label{L1con}
If $S\subset V_G$ is a convex set in $G$, then $S,S'$ and $S\cup S'$ are convex sets in $\id G.$
\end{lemma}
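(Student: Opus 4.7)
The plan is to follow the same template as the proof of Lemma~\ref{L1wcon}, but strengthen ``contains \emph{a} shortest path'' to ``contains \emph{every} shortest path'' by exploiting full convexity of $S$. Two structural facts about distances in $\id G$ will do all the work: first, for $u,v \in V$, every shortest $u$--$v$ path in $\id G$ lies entirely in $V$ (so in particular $d_{\id G}(u,v) = d_G(u,v)$); second, for $u \in V$ and $v' \in V'$, every shortest $u$--$v'$ path in $\id G$ uses exactly one matching edge.

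The first fact is immediate from a length count: any $u$--$v$ walk in $\id G$ that visits $V'$ must traverse at least two matching edges and, between the first and last such crossing, a walk in $G'$ whose length is at least $d_G$ between the corresponding vertices of $V$; the triangle inequality then gives total length at least $d_G(u,v)+2$. Given this, $S$ convex in $G$ implies $S$ convex in $\id G$, because every shortest $u$--$v$ path in $\id G$ between vertices of $S$ is a shortest $u$--$v$ path in $G$, hence contained in $S$. The same argument applied in $G'$ handles $S'$.

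For $S \cup S'$, take $u \in S$ and $v' \in S'$; mixed pairs both inside $S$ or both inside $S'$ are already covered by the previous paragraph. The second structural fact says that a shortest $u$--$v'$ path has the form $u = w_0, w_1, \ldots, w_i, w_i', w_{i+1}', \ldots, w_k' = v'$ for some $0 \leq i \leq k$, where $w_0, w_1, \ldots, w_k$ is a walk in $G$ of length $k = d_G(u,v)$, hence a shortest $u$--$v$ path in $G$. Since $S$ is convex in $G$ and $u, v \in S$, all of $w_0, \ldots, w_k$ lie in $S$, so the original path lies in $S \cup S'$.

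The main obstacle is establishing the ``exactly one matching edge'' claim for cross-paths: a shortest $u$--$v'$ path crosses the matching an odd number of times by parity, and a triangle-inequality estimate on the in-between segments shows that $2k+1$ crossings force length at least $d_G(u,v) + 2k + 1$, so $k=0$ is optimal. This is the only place where the argument uses anything beyond the weakly convex proof, and it is ultimately a short length count.
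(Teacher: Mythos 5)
Your proposal is correct and follows essentially the same route as the paper's proof: reduce to the facts that $d_{\id G}(u,v)=d_G(u,v)$ for $u,v\in V$ with all shortest paths staying in $V$, and that every shortest $u$--$v'$ path uses exactly one matching edge and projects to a shortest $u$--$v$ path in $G$, then invoke convexity of $S$ in $G$. The only difference is that you supply the length-count and parity justifications for these structural facts, which the paper simply asserts.
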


\begin{proof}
Let $u$ and $v$ be any two vertices in $S$. Since the set $S$ is convex, it contains every $u$--$v$ path of length $d_G(u,v)$ in $G$. The shortest $u$--$v$ path in $\id G$ containing a vertex from $V'$ has length $d_G(u,v)+2.$ Thus, $d_{\id G}(u,v)=d_G(u,v)$ and $S$ contains every shortest $u$--$v$ path in $\id G.$ Since the same applies to $u',v'\in S',$ the set $S'$ is also convex. 

For two vertices $u\in S, v'\in S'$ we have $d_{\id G}(u,v')=d_G(u,v)+1.$ Every shortest $u$--$v'$ path in $\id G$ has the form $u=v_0, v_1,...,v_i,v_i',v_{i+1}',..., v_k'=v'$ for some shortest $u$--$v$ path $u=v_0, v_1,...,v_i,..., v_k=v$ in $G$. Since $S$ contains that $u$--$v$ path, $S\cup S'$ is a convex set in $\id G.$
\end{proof}

Weakly convex sets also have the following property.

\begin{lemma}
\label{L2wcon}
A set $S\in V_{\id G}$ is weakly convex if and only if $S_1=S\cap V, S_2'=S\cap V'$ and:

\begin{enumerate}
\item $S_1$, $S_2$ and $S_1\cup S_2$ are weakly convex sets in $G$
\item For every $u\in S_1, v\in S_2$, the shortest $u$-$v$ path in $S_1\cup S_2$ contains a vertex from $S_1\cap S_2$
\end{enumerate}

\end{lemma}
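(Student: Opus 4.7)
The plan is to prove both directions of the equivalence by exploiting the distance identities in $\id G$: for $u,v \in V$ one has $d_{\id G}(u,v)=d_G(u,v)$ (any path visiting $V'$ contributes at least two extra crossing edges), and for $u \in V$, $v' \in V'$ one has $d_{\id G}(u,v')=d_G(u,v)+1$, with every shortest $u$--$v'$ path taking the canonical form
\[
u=v_0,v_1,\ldots,v_j,v_j',v_{j+1}',\ldots,v_k'=v'
\]
for some shortest $u$--$v$ path $v_0,\ldots,v_k$ in $G$ and some crossing index~$j$.

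For the forward direction, assume $S$ is weakly convex in $\id G$. Weak convexity of $S_1$ and $S_2$ in $G$ is immediate: given $u,v \in S_1$, a shortest $u$--$v$ path of $\id G$ contained in $S$ must stay inside $V$, hence inside $S_1$, and likewise for $S_2$. To obtain the weak convexity of $S_1 \cup S_2$ and condition~(2) simultaneously, I would fix $u \in S_1$ and $v \in S_2$ and apply weak convexity of $S$ to the pair $u,v' \in S$. The resulting shortest path has the canonical form above; its $V$-portion lies in $S_1$, while its $V'$-portion forces $v_j,\ldots,v_k \in S_2$, so the crossing vertex $v_j$ belongs to $S_1 \cap S_2$ and the underlying path $v_0,\ldots,v_k$ is a shortest $u$--$v$ path in $G$ contained in $S_1 \cup S_2$ and passing through $S_1 \cap S_2$. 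The remaining cases for weak convexity of $S_1 \cup S_2$ (both endpoints in $S_1$, or both in $S_2$) are already covered.

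For the backward direction, assume (1) and (2), and take $x,y \in S$. If $x,y \in S_1$, weak convexity of $S_1$ together with the distance identity produces a shortest path of $\id G$ inside $S_1 \subseteq S$; the case $x,y \in S_2'$ is symmetric. The substantive case is $x \in S_1$ and $y' \in S_2'$: condition~(2) supplies a vertex $w \in S_1 \cap S_2$ lying on some shortest $x$--$y$ path in $G$, so $d_G(x,y)=d_G(x,w)+d_G(w,y)$. I would then use weak convexity of $S_1$ and $S_2$ to pick shortest $x$--$w$ and $w$--$y$ paths $P_1 \subseteq S_1$ and $P_2 \subseteq S_2$, and concatenate $P_1$, the crossing edge $ww'$, and the copy $P_2'$. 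The total length is $d_G(x,w)+1+d_G(w,y)=d_G(x,y)+1=d_{\id G}(x,y')$, so this is a shortest $x$--$y'$ path lying in $S_1 \cup \{w'\} \cup S_2' \subseteq S$.

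The main substantive point is to read condition~(2) existentially: it provides a single ``bridge'' vertex $w$ that simultaneously witnesses the additivity of $d_G$ and lies in both $S_1$ and $S_2$, which is exactly what is needed to splice together shortest half-paths from the two copies of $G$ inside $\id G$. Everything else reduces to bookkeeping about the canonical form of shortest paths in $\id G$ and the fact that distances on each side of the prism agree with those in $G$.
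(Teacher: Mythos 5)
Your proposal is correct and follows essentially the same route as the paper: both directions rest on the canonical form of shortest $u$--$v'$ paths in $\id G$ and on using the bridge vertex in $S_1\cap S_2$ to splice shortest half-paths from the two copies. If anything, your backward direction is slightly more careful than the paper's, since you explicitly replace the two segments of the chosen path by shortest sub-paths lying in $S_1$ and $S_2$ rather than asserting that the original segments already do.
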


\begin{proof}
Let $S$ be a weakly convex set in $\id G$ and let $u,v\in S_1.$ Since there is no shortest $u$-$v$ path in $\id G$ containing a vertex from $V'$, the set $S_1$ must contain a shortest $u$-$v$ path, which is also a shortest $u$-$v$ path in $G$. The same applies to $u_1', v_1'\in S_2'.$ Thus $S_1$ and $S_2$ are weakly convex sets in $G$. 
Now let $u\in S_1$ and $v'\in S_2.$ If $u=v_0, v_1,...,v_i,..., v_k=v$ is a $u$-$v$ path in $G$ shorter than the shortest $u$-$v$ path in $S_1\cup S_2,$ then $u=v_0, v_1,...,v_i,v_i',v_{i+1}',..., v_k'=v'$ is a $u$-$v'$ path in $\id G$ shorter than the shortest $u$-$v'$ path in $S$ and $S$ is not weakly convex. Thus, $S_1\cup S_2$ is a weakly convex set in $G$. Furthermore, since $v_i\in S_1$ and $v_i'\in S_2',$ it is clear that $v_i\in S_1\cap S_2.$

Now let $S$ be a set satisfying conditions (1)-(2). By Lemma \ref{L1wcon} $S_1,S_2'$ and $S_1\cup S_2$ are convex in $\id G.$
Let $u\in S_1, v\in S_2.$ The set $S_1\cup S_2$ contains a shortest $u$-$v$ path $u=v_0,...,v_i,...,v_k=v,$ where $v_i\in S_1\cap S_2.$ Since $S_1$ and $S_2$ are weakly convex, the paths $v_0,...v_i$ and $v_i,...,v_k$ are contained in $S_1$ and $S_2$, respectively. Thus $u=v_0,...,v_i,v_i'...,v_k'=v'$ is a shortest $u$-$v'$ path in $\id G$ and $S$ contains all of its vertices. 
\end{proof}

This is not the case with convex sets. If a convex set in $\id G$ contains a pair of vertices $u,v'$ it must also contain $u'$ and $v.$ This leads to some differences between convex and weakly convex domination.

\begin{theorem}
\label{TidG}
If $G$ is any connected graph, then $\gamma_{con}(\id G)=\min\{2\gamma_{con}(G),\left|V_G\right|\}.$
\end{theorem}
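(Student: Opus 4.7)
The plan is to prove two inequalities: the upper bound $\gamma_{con}(\id G)\leq\min\{2\gamma_{con}(G),|V_G|\}$ and a matching lower bound. The upper bound is the easy half. For $\gamma_{con}(\id G)\leq|V_G|$, I would check directly that $V$ is a convex dominating set of $\id G$: it dominates $V'$ through the matching $M_{\id}$, and for any $u,v\in V$ any shortest $u$--$v$ path in $\id G$ must stay inside $V$, since a path through $V'$ costs at least $2$ extra edges. For $\gamma_{con}(\id G)\leq 2\gamma_{con}(G)$, I would take a $\gamma_{con}$-set $S$ of $G$ and observe that $S\cup S'$ is a dominating set and is convex by Lemma \ref{L1con}.

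The lower bound is the substantive part. Suppose $D$ is a convex dominating set of $\id G$ with $|D|<|V_G|$; the goal is to show $|D|\geq 2\gamma_{con}(G)$, which together with the upper bound yields equality with $\min\{2\gamma_{con}(G),|V_G|\}$. Let $D_1=D\cap V$ and $D_2'=D\cap V'$. By Proposition \ref{PD1D2}, both $D_1$ and $D_2'$ are nonempty. I claim that $D_1=D_2$ and that this common set is a convex dominating set of $G$.

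The key structural observation is the following fact about $\id G$: for $u\in V$ and $v'\in V'$, every shortest $u$--$v'$ path has the form $u=v_0,v_1,\dots,v_i,v_i',v_{i+1}',\dots,v_k'=v'$ for some shortest $u$--$v$ path in $G$ and some crossing index $i$. Varying $i$ over every vertex $w$ of a shortest $u$--$v$ path produces a shortest $u$--$v'$ path in $\id G$ that contains both $w$ and $w'$. Since $D$ is convex, this forces both $w\in D_1$ and $w'\in D_2'$ for every vertex $w$ on every shortest $u$--$v$ path in $G$, whenever $u\in D_1$ and $v'\in D_2'$. Applying this with $w=u$ gives $u'\in D$, hence $D_1\subseteq D_2$; the symmetric argument gives $D_2\subseteq D_1$, so $D_1=D_2$. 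Convexity of $D_1$ in $G$ then follows from the same observation: for any $u,v\in D_1=D_2$, every shortest $u$--$v$ path in $G$ lies in $D_1$. Domination of $V$ by $D_1$ in $G$ follows from domination of $V$ by $D$ in $\id G$, since the only neighbor of $x\in V$ lying in $V'$ is $x'$, and $x'\in D$ would force $x\in D_2=D_1$.

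Putting these pieces together gives $|D|=|D_1|+|D_2'|=2|D_1|\geq 2\gamma_{con}(G)$, which combined with the upper bound closes the case $|D|<|V_G|$; the case $|D|=|V_G|$ is automatic. The main obstacle I anticipate is carefully justifying that every shortest $u$--$v'$ path crosses the matching $M_{\id}$ exactly once and at a vertex of a shortest $u$--$v$ path in $G$, so that the convexity of $D$ can be leveraged to force the strong equality $D_1=D_2$; once that structural lemma about shortest paths in $\id G$ is in hand, the rest is bookkeeping.
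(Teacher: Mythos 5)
Your proposal is correct and follows essentially the same route as the paper: the upper bound via $V$ and $S\cup S'$ (Lemma \ref{L1con}), and the lower bound by using convexity on a mixed pair $u\in D\cap V$, $v'\in D\cap V'$ to force $D=A\cup A'$ with $A$ a convex dominating set of $G$. Your write-up is in fact somewhat more explicit than the paper's about the key structural fact that every shortest $u$--$v'$ path in $\id G$ crosses the matching exactly once along a shortest $u$--$v$ path of $G$, which is the step the paper leaves implicit.
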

\begin{proof}

Let $A$ be a $\gamma_{con}$-set of $G$. Obviously, $A\cup A'$ dominates $\id G$. By Lemma \ref{L1con}, $A\cup A'$ is a convex set in $\id G$. Hence, $A\cup A'$ is clearly a convex dominating set of $\pi G$. The set $V$ is also a convex dominating set of $\pi G.$

Let $D$ be a $\gamma_{con}$-set in $\pi G.$ If $D\cap V'=\emptyset,$ then obviously $D=V.$ Similarly, if $D\cap V=\emptyset,$ then $D=V'$.
If $D$ contains vertices $u\in V$ and $v'\in V',$ it must also contain every shortest $u$-$v'$ path in $\pi G$. Thus, $D$ contains $u',v$ and all shortest $u$-$v$ and $u'$-$v'$ paths. Therefore, in this case, $D=A\cup A'$ for some $A\subseteq V$ and $A$ is a $\gamma_{con}$-set of $G$.
\end{proof}

As a result, we have the following.

\begin{cor}
\label{Tconprismfix}
Every connected graph $G$ has the following properties:
\begin{enumerate}
\item $G$ is a prism $\gamma_{con}$-fixer if and only if $\gamma_{con}(G)=\left|V_G\right|;$
\item $G$ is a prism $\gamma_{con}$-doubler if and only if $\gamma_{con}(G)\leq\frac{1}{2}\left|V_G\right|.$
\end{enumerate}
\end{cor}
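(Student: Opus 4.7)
The plan is to derive this corollary as a direct consequence of Theorem \ref{TidG}, which gives the explicit formula $\gamma_{con}(\id G)=\min\{2\gamma_{con}(G),|V_G|\}$. Both parts amount to analyzing when this minimum equals $\gamma_{con}(G)$ and when it equals $2\gamma_{con}(G)$, respectively, so the proof reduces to a short case analysis on which term attains the minimum.

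For part (1), I would observe that $G$ is a prism $\gamma_{con}$-fixer precisely when $\min\{2\gamma_{con}(G),|V_G|\}=\gamma_{con}(G)$. The branch $2\gamma_{con}(G)=\gamma_{con}(G)$ would force $\gamma_{con}(G)=0$, which is impossible in a connected (nonempty) graph, so equality must be achieved through the other branch, giving $|V_G|=\gamma_{con}(G)$. The converse direction is equally short: if $\gamma_{con}(G)=|V_G|$, then $2\gamma_{con}(G)\geq|V_G|$, so the minimum in Theorem \ref{TidG} equals $|V_G|=\gamma_{con}(G)$.

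For part (2), I would note that $G$ is a prism $\gamma_{con}$-doubler exactly when $\min\{2\gamma_{con}(G),|V_G|\}=2\gamma_{con}(G)$, which is equivalent to $2\gamma_{con}(G)\leq|V_G|$, i.e.\ $\gamma_{con}(G)\leq\tfrac{1}{2}|V_G|$. Again the converse is immediate from the same formula.

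The argument is essentially a one-line unpacking of Theorem \ref{TidG}, so there is no real obstacle; the only thing to be careful about is the trivial exclusion $\gamma_{con}(G)\geq 1$ (guaranteed since $G$ is connected and thus nonempty) used to rule out the degenerate case $2\gamma_{con}(G)=\gamma_{con}(G)$ in part (1). No new constructions or inequalities beyond Theorem \ref{TidG} are needed.
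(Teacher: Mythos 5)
Your proposal is correct and follows essentially the same route as the paper: both derive the corollary by unpacking the formula $\gamma_{con}(\id G)=\min\{2\gamma_{con}(G),\left|V_G\right|\}$ from Theorem \ref{TidG} and ruling out the degenerate branch $2\gamma_{con}(G)=\gamma_{con}(G)$ using $\gamma_{con}(G)\geq 1$. Your explicit remark on this last point is in fact slightly more careful than the paper's ``obviously''.
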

\begin{proof}
If $\gamma_{con}(\id G)=$$\gamma_{con}(G),$ then, obviously, $\gamma_{con}(\id G)\neq 2$$\gamma_{con}(G)$. Thus, by Theorem \ref{TidG}, we have $\gamma_{con}(\id G)=\left|V_G\right|.$

The set $V$ is a convex dominating set of $\id G.$ Thus, if $\gamma_{con}(G)=\left|V_G\right|,$ then, by Theorem \ref{TidG}, $V$ is a minimum convex dominating set of $\id G$.

By Theorem \ref{TidG}, $\gamma_{con}(\id G)=2$$\gamma_{con}(G)$ if and only if $2\gamma_{con}(G)\leq\left|V_G\right|.$ Thus, $\gamma_{con}(\id G)=2$$\gamma_{con}(G)$ if and only if $\gamma_{con}(G)=\frac{1}{2}\left|V_G\right|$.
\end{proof}

Since every universal $\gamma_{con}$-fixer is a prism $\gamma_{con}$-fixer, and every universal $\gamma_{con}$-doubler is a prism $\gamma_{con}$-doubler, we also have the following corollary.

\begin{cor}
\label{corconfix}
Let $G$ be a connected graph. Then:
\begin{enumerate}
\item If $G$ is a universal $\gamma_{con}$-fixer, then $\gamma_{con}(G)=\left|V_G\right|;$
\item If $G$ is a universal $\gamma_{con}$-doubler, then $\gamma_{con}(G)\leq\frac{1}{2}\left|V_G\right|.$
\end{enumerate}
\end{cor}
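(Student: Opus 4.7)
The plan is essentially to specialize the universal hypothesis to a single permutation, namely $\pi = \id$, and then invoke Corollary \ref{Tconprismfix}. Concretely, for part (1), I would begin by noting that if $G$ is a universal $\gamma_{con}$-fixer then $\gamma_{con}(\pi G) = \gamma_{con}(G)$ for every permutation $\pi$ of $V_G$; applying this with the particular choice $\pi = \id$ gives $\gamma_{con}(\id G) = \gamma_{con}(G)$, which is precisely the definition of a prism $\gamma_{con}$-fixer. Corollary \ref{Tconprismfix}(1) then yields $\gamma_{con}(G) = |V_G|$.

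For part (2), I would argue analogously: a universal $\gamma_{con}$-doubler satisfies $\gamma_{con}(\pi G) = 2\gamma_{con}(G)$ for all $\pi$, in particular for $\pi = \id$, so $G$ is a prism $\gamma_{con}$-doubler, and Corollary \ref{Tconprismfix}(2) delivers $\gamma_{con}(G) \leq \frac{1}{2}|V_G|$.

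There is really no obstacle here; this is exactly the kind of result that the text flags as a corollary, since the implication ``universal property $\Rightarrow$ property at $\id$'' is immediate from the definitions. The only thing worth being explicit about is that the class of permutations of $V_G$ is nonempty and contains $\id$, so that the specialization is valid for any connected graph $G$. Accordingly, the proof should be kept to two short sentences, one for each implication, each of them citing Corollary \ref{Tconprismfix} for the conclusion.
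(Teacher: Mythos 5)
Your proof is correct and matches the paper's reasoning exactly: the paper derives this corollary by observing that every universal $\gamma_{con}$-fixer (resp.\ doubler) is in particular a prism $\gamma_{con}$-fixer (resp.\ doubler), i.e.\ by specializing to $\pi=\id$ and invoking Corollary \ref{Tconprismfix}. No issues.
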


Weakly convex domination has a somewhat similar property.

\begin{theorem}
If $G$ is a connected graph, then $\gamma_{wcon}(\id G)\leq\min\{\left|V_G\right|, 2\gamma_{wcon}(G)\}$.
\end{theorem}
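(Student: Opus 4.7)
The plan is to prove the two claimed upper bounds separately by exhibiting explicit weakly convex dominating sets of $\id G$, one of size $2\gamma_{wcon}(G)$ and one of size $|V_G|$. Both constructions are essentially immediate applications of Lemma \ref{L1wcon}, so the work is in checking dominance; weak convexity will be free.

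For the bound $\gamma_{wcon}(\id G)\le 2\gamma_{wcon}(G)$, I would take a $\gamma_{wcon}$-set $A$ of $G$ and consider $D=A\cup A'$. This set has cardinality $2\gamma_{wcon}(G)$, and since $A$ is weakly convex in $G$, Lemma \ref{L1wcon} tells us that $D=A\cup A'$ is weakly convex in $\id G$. Domination is straightforward: $A\succ V$ in $G$ implies $A\succ V$ in $\id G$ (edges of $G$ persist in $\id G$), and analogously $A'\succ V'$ in $\id G$; together, $D\succ V\cup V'$. Hence $D$ is a weakly convex dominating set of $\id G$.

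For the bound $\gamma_{wcon}(\id G)\le|V_G|$, I would simply take $D=V$. Every $v'\in V'$ is matched to $v\in V$ via the matching edge $vv'$ (since $\pi=\id$), so $V$ dominates $\id G$. Weak convexity of $V$ in $\id G$ follows either from Lemma \ref{L1wcon} applied with $S=V_G$ (the whole vertex set of a connected graph is trivially weakly convex), or by a direct observation: for any $u,v\in V$, any path in $\id G$ from $u$ to $v$ that enters $V'$ must use at least two matching edges, hence has length at least $d_G(u,v)+2$, so $d_{\id G}(u,v)=d_G(u,v)$ and every shortest $u$--$v$ path in $G$ remains a shortest $u$--$v$ path in $\id G$ and lies in $V$.

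There is no real obstacle: unlike the analogous question for general $\pi$ (where weak convexity of $V$ may fail, as in Theorem \ref{Tdiam3V}), the identity permutation ensures that shortcuts through $V'$ always cost at least $2$ extra matching edges, so no extra diameter hypothesis is needed. The combined bound $\gamma_{wcon}(\id G)\le\min\{|V_G|,2\gamma_{wcon}(G)\}$ follows by taking whichever of the two constructed sets is smaller.
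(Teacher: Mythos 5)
Your proposal is correct and follows essentially the same route as the paper: both bounds are obtained by exhibiting $V$ and $A\cup A'$ (for a $\gamma_{wcon}$-set $A$ of $G$) as weakly convex dominating sets of $\id G$, with weak convexity supplied by Lemma \ref{L1wcon}. Your extra direct argument that shortcuts through $V'$ cost at least two matching edges is a nice self-contained justification, but it is already contained in the proof of that lemma.
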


\begin{proof}
Obviously, $V$ is a dominating set in $\id G.$ By Lemma \ref{L1wcon}, it is also a weakly convex set in $\id G.$ Thus, $\gamma_{wcon}(\id G)\leq\left|V_G\right|.$

If $S$ is a $\gamma_{wcon}$-set of $G,$ then $S\cup S'$ is a dominating set in $\id G,$ as $S\succ V$ and $S'\succ V'.$ Since Lemma \ref{L1wcon} implies that $S\cup S'$ is a (not necessarily minimal) weakly convex dominating set in $\id G$ and thus $\gamma_{wcon}(\id G)\leq 2\gamma_{wcon}(G)$.
\end{proof}

However, thanks to Lemma \ref{L2wcon}, $\gamma_{wcon}(\id G)$ is not necessarily equal to $\min\{\left|V_G\right|,$ $2\gamma_{wcon}(G)\}$. In fact, the following is true.

\begin{theorem}
Let $G$ be a connected graph. The graph $\id G$ has a weakly convex dominating set $D\notin\{V,V'\}$ of cardinality $\gamma_{wcon}(G)+k$ if and only if $G$ has a weakly convex dominating set $A$ which can be partitioned into three nonempty sets $A_1,A_2,A_3$ such that $\left|A\right| + \left|A_2\right|=\gamma_{wcon}(G) + k$ and:

\begin{enumerate}
\item $A_1\cup A_2$ and $A_2\cup A_3$ are weakly convex,
\item For every $u\in A_1\cup A_2, v\in A_2\cup A_3$, the shortest $u$--$v$ path in $A$ contains a vertex from $A_2$,
\item $A_1\cup A_2\succ V-A_3$ and $A_3\cup A_2\succ V-A_1$.
\end{enumerate}

In particular, $\gamma_{wcon}(\id G) = \gamma_{wcon}(G) + 1$ if and only if $G$ has a $\gamma_{wcon}$-set $A=A_1\cup A_2\cup A_3$ such that conditions (1)-(3) are fulfilled and $\left|A_2\right| = 1$.

\end{theorem}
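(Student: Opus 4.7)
The plan is to establish a bijection in the style of Lemma~\ref{L2wcon} between weakly convex dominating sets $D$ of $\id G$ with $D\notin\{V,V'\}$ and weakly convex dominating sets $A$ of $G$ equipped with a partition $A=A_1\cup A_2\cup A_3$ satisfying (1)--(3). The correspondence is the natural one: given $D$, write $D_1=D\cap V$ and $D_2'=D\cap V'$, and set $A_2=D_1\cap D_2$, $A_1=D_1\setminus D_2$, $A_3=D_2\setminus D_1$, $A=D_1\cup D_2$; in the other direction, given $A$ with its partition, set $D_1=A_1\cup A_2$, $D_2=A_2\cup A_3$, and $D=D_1\cup D_2'$. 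Under this map the cardinality identity $|D|=|D_1|+|D_2|=|A|+|A_2|$ is immediate.

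For the forward direction, I would first argue that $D_1$ and $D_2$ are both nonempty, because otherwise $V$ or $V'$ could be dominated by $D$ only via matching edges, forcing $D\in\{V,V'\}$. Corollary~\ref{CD1D2weak}(2) applied with $\pi=\id$ then yields $x\in D_1$ with $x\in D_2$, so $A_2\neq\emptyset$. Condition~(1) for $A_1\cup A_2=D_1$ and $A_2\cup A_3=D_2$ is part of the conclusion of Lemma~\ref{L2wcon}; condition~(2), asserting that a shortest path from $A_1\cup A_2$ to $A_2\cup A_3$ meets $A_2$, is exactly condition~(2) of the same lemma since $A_2=D_1\cap D_2$. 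Condition~(3) follows from the fact that the only $V$--$V'$ edges in $\id G$ are the matching edges $uu'$, so a vertex $v\in V\setminus A_3$ lies outside $D_2$ and must therefore be dominated in $G$ by $D_1=A_1\cup A_2$, and the symmetric argument handles $V'\setminus A_1'$. As a by-product, $A=D_1\cup D_2$ is weakly convex in $G$ by Lemma~\ref{L2wcon} and is dominating by~(3), hence a weakly convex dominating set of $G$.

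For the backward direction, the set $D$ built from $A$ dominates $\id G$ by reading condition~(3) in both copies (vertices in $A_3$ are dominated through the matching by $(A_3)'\subseteq D_2'$, and vertices of $V$ outside $A_3$ are dominated by $D_1$ in $G$; symmetrically in $V'$). Weak convexity of $D$ in $\id G$ is a direct instance of Lemma~\ref{L2wcon} with $S_1=D_1$, $S_2=D_2$: the lemma's hypotheses coincide with conditions (1) and (2) of the theorem, and $A=D_1\cup D_2$ is weakly convex in $G$ by hypothesis. Since $A_1\cup A_2$ and $A_2\cup A_3$ are nonempty, $D\notin\{V,V'\}$.

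For the ``in particular'' clause, combining the main equivalence with $|A|\ge\gamma_{wcon}(G)$ and $|A_2|\ge 1$ forces $|A_2|=1$ and $|A|=\gamma_{wcon}(G)$, so $A$ is a $\gamma_{wcon}$-set; conversely, the backward construction yields a weakly convex dominating set of $\id G$ of size $\gamma_{wcon}(G)+1$, while any strictly smaller weakly convex dominating set of $\id G$ outside $\{V,V'\}$ would, by the main equivalence, produce $A$ with $|A|+|A_2|\le\gamma_{wcon}(G)$, contradicting $|A|\ge\gamma_{wcon}(G)$ and $|A_2|\ge 1$. The step I expect to be most delicate is verifying condition~(2) of the theorem in the forward direction (via the ``shortest path'' clause of Lemma~\ref{L2wcon}), and the boundary case where $V$ or $V'$ is itself a minimum weakly convex dominating set of $\id G$ will require separate inspection.
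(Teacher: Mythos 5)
Your proposal is correct and follows essentially the same route as the paper: the natural correspondence $A_1=D_1\setminus D_2$, $A_2=D_1\cap D_2$, $A_3=D_2\setminus D_1$ together with Lemma~\ref{L2wcon} in both directions. You are in fact somewhat more explicit than the paper, which leaves the nonemptiness of $A_2$, the cardinality count, and the ``in particular'' clause implicit.
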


\begin{proof}
Let $A=A_1\cup A_2\cup A_3$ be a weakly convex dominating set of a connected graph $G$. The set $D=A_1\cup A_2\cup A_2'\cup A_3'$ is a dominating set of $\id G,$ as $A_1\cup A_2\succ V-A_3\cup A_1'$ and $A_2'\cup A_3'\succ V'-A_1'\cup A_3.$ By Lemma \ref{L2wcon}, $D$ is a weakly convex set. Thus, $\id G$ has a weakly convex dominating set $D\notin\{V,V'\}$ of size $\left|A\right|+\left|A_2\right|.$

Now let $D$ be a weakly convex set of $\id G$ and let $D_1=D\cap V$ and $D_2'=D\cap V'$. If either $D_1$ or $D_2'$ was an empty set, then $D$ would be either $V$ or $V'.$ By Lemma \ref{L2wcon} the sets $D_1$ and $D_2$ are convex and for every $u\in D_1, v\in D_2$ there is a shortest $u$-$v$ path in $D_1\cup D_2$ containing a vertex from $D_1\cap D_2$. Since the only veritces in $V'$ dominated by $D_1$ are in $D_1'$, it is clear that $D_2\succ V-D_1.$ Similarly, $D_1\succ V-D_2$, as the only vertices in $V$ dominated by $D_2'$ are in $D_2.$ Thus $A=D_1\cup D_2$ is a weakly convex domiating set of $G$ such that, for $A_1=D_1-D_2,$ $A_2=D_1\cap D_2$ and $A_3=D_2-D_1$ conditions $(1)$-$(3)$ are fulfilled. 
\end{proof}

For example graph $G$ in Fig. \ref{fig:Tw16} has such a $\gamma_{wcon}$-set. As a result $\gamma_{wcon}(\id G)<\min\{\left|V_G\right|,$ $2\gamma_{wcon}(G)\}$.

\begin{center}
\begin{figure}[H]
	\begin{picture}(300,170)
	\put(25,-10){\includegraphics[scale=0.35]{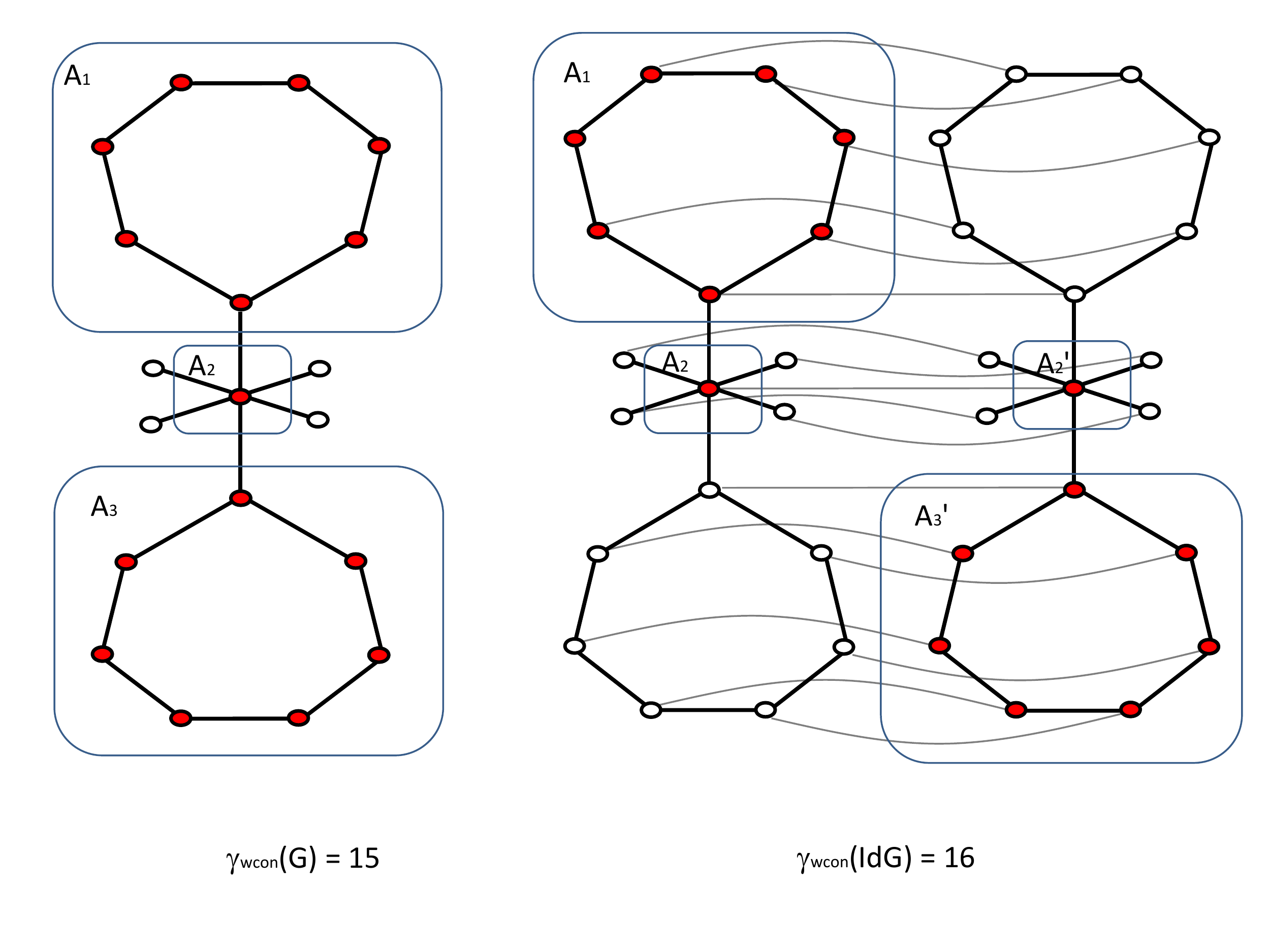}}
	\end{picture}
	\caption{The set $A_1\cup A_2\cup A_2'\cup A_3'$ is a weakly convex dominating set of $\id G$ with $\gamma_{wcon}(G)+1$ vertices.}
	\label{fig:Tw16}
\end{figure}
\end{center}

\section{Upper and lower bounds}

It is well known that the inequalities 

\begin{center}
\begin{equation}
\label{eq0}
 \gamma (G)\leq \gamma (\pi G)\leq 2\gamma (G)
\end{equation}
\end{center}

\noindent hold for any graph $G$ and any permutation $\pi$ of its vertex set. 
At the conference Colorings Inependence and Domination in 2015 Rita Zuazua conjectured that similar inequalities hold for convex and weakly convex domination, i.e.

\begin{center}
\begin{equation}
\label{eq2}
 \gamma_{wcon}(G)\leq\gamma_{wcon}(\pi G)\leq 2\gamma_{wcon}(G)
\end{equation}
\\and 

\begin{equation}
\label{eq1}
 \gamma_{con}(G)\leq\gamma_{con}(\pi G)\leq 2\gamma_{con}(G)
\end{equation}
\end{center}
\noindent However, this is not true in general. The smallest counterexample is the path $P_3$ with $V_{P_3}=\{1,2,3\}, E_{P_3}=\{12,23\}$ and the permutation $\pi=(12)$. In this case $\gamma_{con}(P_3) = \gamma_{wcon}(P_3) = 1$ while $\gamma_{con}(\pi P_3) =\gamma_{wcon}(P_3) = 3$.

For a star $K_{1,k}$ with $k\geq 2$ and the permutation $\pi=(01)$, where $0$ is the central vertex and $1$ is one of the other vertices, we have $\gamma_{con}(K_{1,k}) = \gamma_{wcon}(K_{1,k}) =1$ and $\gamma_{con}(\pi K_{1,k}) = 4$ while $\gamma_{wcon}(K_{1,k}) = 3$. Thus, the upper bounds in (\ref{eq2}) and (\ref{eq1}) not hold for $K_{1,k}.$

Furthermore, for every $k\in \mathbb{N}$ there is a graph $G$ and permutation $\pi$ such that $\gamma_{wcon}(\pi G)-2\gamma_{wcon}(G)\geq k$.

Let us begin with the cycle $C_7=(\{0,1,2,3,4,5,6\},\{01,12,23,34,45,56,60\})$ and the permutation $\pi=(13)(46).$ 
The weakly convex domination number of $C_7$ is $7$, but the graph $\pi C_7$ can be dominated by a weakly convex set with only $6$ vertices: $\{0,0',1,1',6,6'\}$.

In fact, the difference can be arbitrarily large. For any $k\in \mathbb{N}$ we can construct a graph $G_k$ as follows (see Fig. \ref{fig:C7ks}).

\begin{enumerate}
\item Take $k$ copies of $C_7$. Denote the $i$-th copy of the vertex $j$ by $(i,j),$
\item Replace the vertices $(1,0),...,(k,0)$ with a single vertex $(0,0).$
\end{enumerate}

The permutation $\pi_k$ is defined as $\pi_k(i,j)=(i,\pi(j)).$ Then $\gamma_{wcon}(G_k)=6k+1$ and $\gamma_{wcon}(\pi_kG_k)=4k+2$ (The set $\{(0,0),(0,0)',(1,1),...,(k,1),(1,1)',...,(k,1)',$\\$(1,6),...,(k,6),(1,6)',...,(k,6)'\}$ is a weakly convex domiating set of $\pi_kG_k$). Hence $\gamma_{wcon}(G_k)-\gamma_{wcon}(\pi_kG_k) = 2k-1.$ 

\begin{center}
\begin{figure}[H]
	\begin{picture}(300,190)
	\put(20,-20){\includegraphics[scale=0.4]{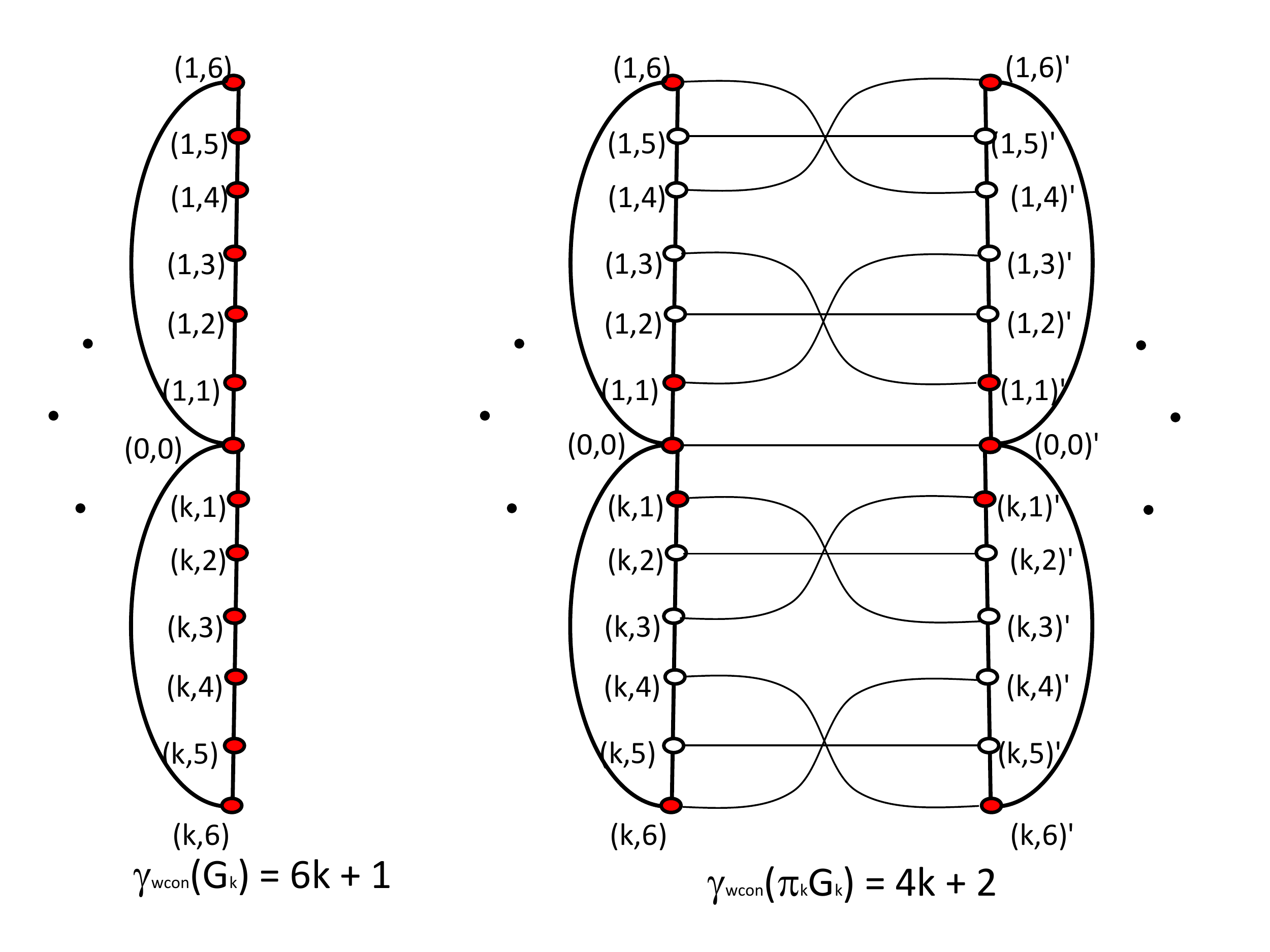}}
	\end{picture}
	\caption{The graphs $G_k$ and $\pi_kG_k$ and their $\gamma_{con}$-sets.}
	\label{fig:C7ks}
\end{figure}
\end{center}

The first inequality in (\ref{eq2}) can also be violated.

Let us consider the path $P_6=(\{0,1,2,3,4,5\},\{01,12,23,34,45\})$ and the permutation $\sigma=(14)(23).$ The weakly convex domination number of $P_6$ is $4$, but the weakly convex domination number of $\sigma P_6$ is $12$.

For $k\geq 2$ we construct the graph $H_k$ as follows

\begin{enumerate}
\item Take $k$ paths $P_6$. Denote the $i$-th copy of the vertex $j$ as $(i,j)$,
\item Replace the vertices $(1,0),...,(k,0)$ with a single vertex $(0,0).$
\end{enumerate}

\begin{center}
\begin{figure}[H]
	\begin{picture}(300,190)
	\put(25,-10){\includegraphics[scale=0.35]{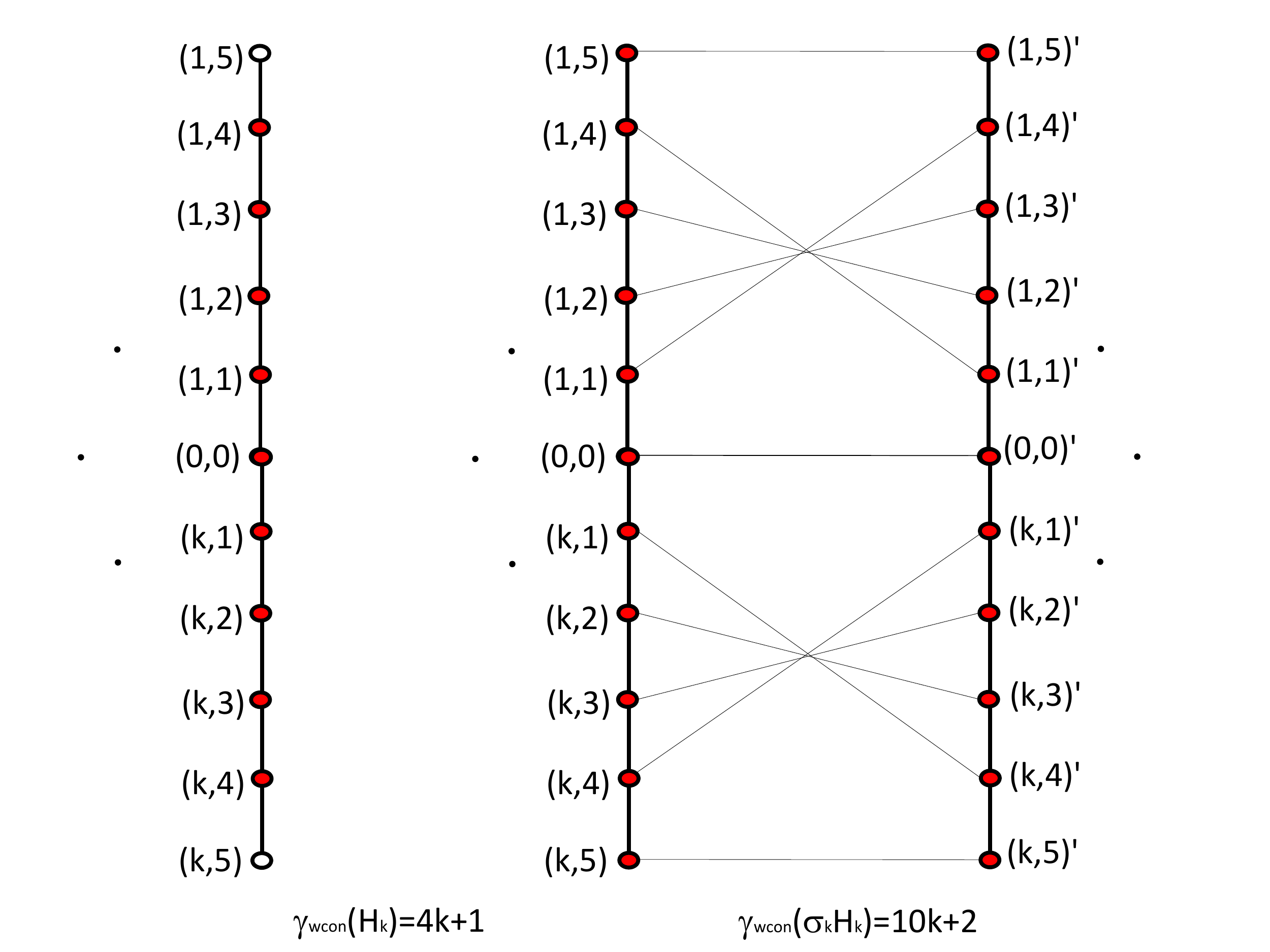}}
	\end{picture}
	\caption{The graphs $H_k$ and $\sigma_k H_k$ and their $\gamma_{wcon}$-sets.}
	\label{fig:p6}
\end{figure}
\end{center}

The permutation $\sigma_k$ is defined as $\sigma_k(i,j)=(i,\sigma(j)).$

It is easy to see that $\gamma_{wcon}(H_k)=4k+1$ and $\gamma_{wcon}(\pi_k H_k)=10k+2.$ Thus $\gamma_{wcon}(\sigma_k H_k)-2\gamma_{wcon}(H_k)=2k.$ Once again, the difference can be arbitrarily large.

Thus for any $k\in \mathbb{N}$ there exist graphs $G, H$ and permutation $\pi:V_G\mapsto V_G, \sigma:V_H\mapsto V_H$ such that $\gamma_{wcon}(G)-\gamma_{wcon}(\pi G)\geq k$ and $\gamma_{wcon}(\sigma H)-2\gamma_{wcon}(H)\geq k.$

Both inequalities (\ref{eq1}) are also violated by entire families of graphs.

Let $T_{k,l}$ be a tree with $V_{T_{k,l}}=\{0,1,...,k,(1,1),...,(1,l),...,(k,1),...,(k,l)\}$ and $E_{T_{k,l}}=\{0i:1\leq i\leq k\}\cup\{i(i,j): 1\leq i\leq k, 1\leq j \leq l\}$ for $k\geq 2$ and $l\geq 1$ (see Fig. \ref{fig:Tkl}) and let $\pi_{k,l}=(1...k).$ 

\begin{center}
\begin{figure}[H]
	\begin{picture}(300,190)
	\put(25,-10){\includegraphics[scale=0.4]{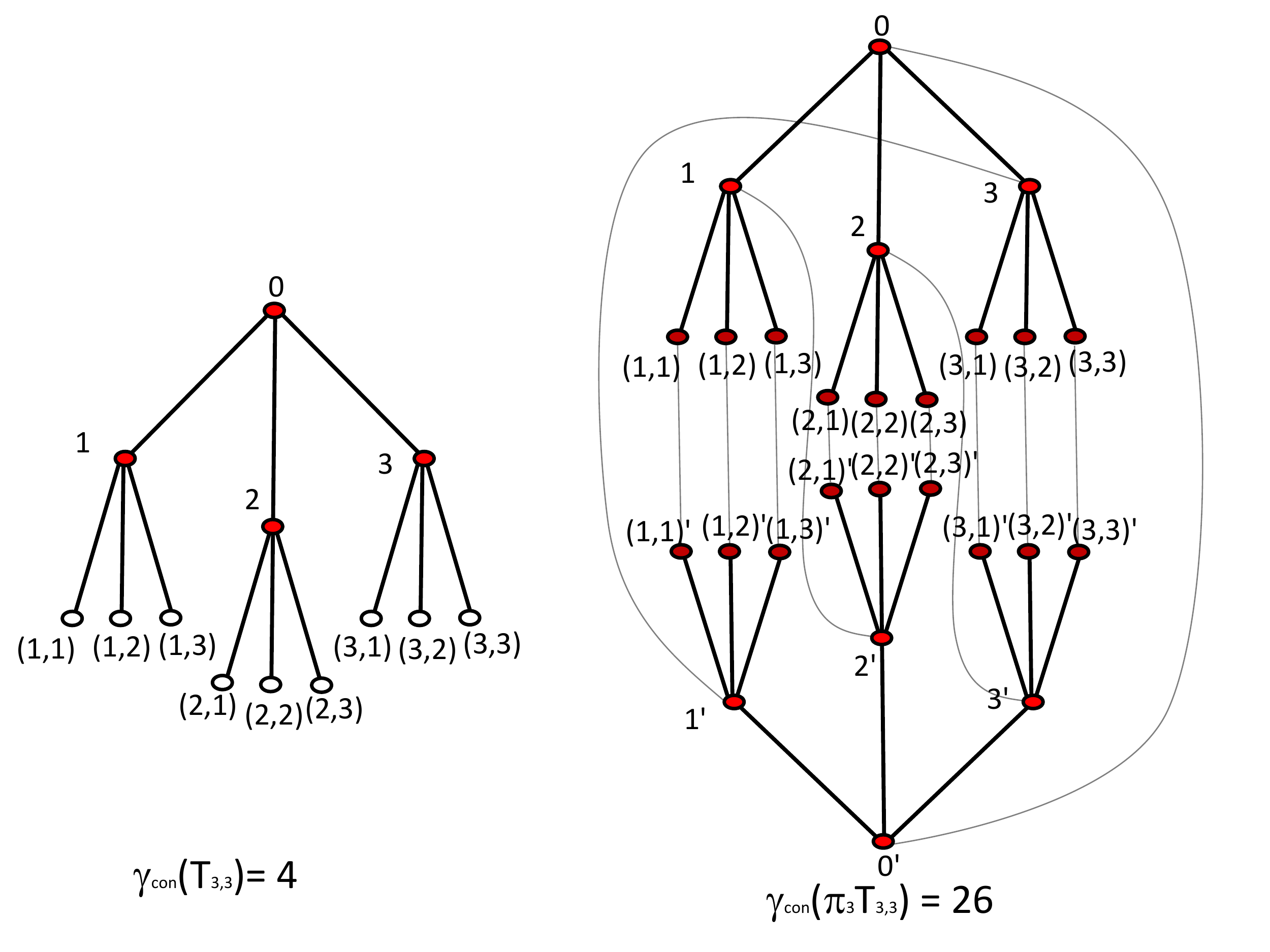}}
	\end{picture}
	\caption{The graphs $T_{3,3}$ and $\pi_{3,3} T_{3,3}$ and their $\gamma_{con}$-sets.}
	\label{fig:Tkl}
\end{figure}
\end{center}

Every convex set of $\pi_{k,l} T_{k,l}$ which dominates $S=\{(i,j):1\leq i\leq k, 1\leq j\leq l\}\cup \{(i,j)':1\leq i\leq k, 1\leq j\leq l\}$ contains $\{1,...,k,1',...,k'\}$. Every convex set containing $\{1,...,k,1',...,k'\}$ also contains $S\cup\{0,0'\},$ as $i,0,0',i'$ and $i,(i,j),(i,j)',i'$ are all shortest $i-i'$ paths for $1\leq i\leq k.$ Thus, we have $\gamma_{con}(\pi_{k,l} T_{k,l})=2kl+2k+2$. At the same time we have $\gamma_{con}(T_{k,l})=k+1$. Therefore, $\gamma_{con}(\pi_{k,l} T_{k,l})-2\gamma_{con}(T_{k,l})=2kl.$

The first inequality in (\ref{eq1}) can also be violated. 
For $k\geq 3$ let $G$ be a graph constructed as follows (see Fig. \ref{fig:Gk}):

\begin{enumerate}
\item Take $k$ copies of the path $P_7$ with $V_{P_7^i}=\{(j,i): 1\leq j\leq 7\}$ and $E_{P_7^i}=\{(j,i)(j+1,i): 2\leq j\leq 6\};$
\item For $j\in\{1,2,6,7\}$ replace the set $\{(j,i): 1\leq i\leq k\}$ with a single vertex $j;$
\item For $2\leq i\leq k$ add edges $(4,1)(4,i).$
\end{enumerate}

We define the permutation $\pi_k$ as $\pi_k=(26(5,1)(3,1)).$

Every convex dominating set of $G_k$ must contain the vertices $2$ and $6$, as well as all vertices of every shortest $2$--$6$ path. Since $2,(3,i),(4,i),(5,i),6$ for $i\in\{1,...,k\}$ are all shortest $2$--$6$ paths, $\gamma_{con}(G_k) = 3k + 2.$ However, the set $\{2,2',(3,1),(3,1)',(4,1),$ $(4,1)',(5,1),(5,1)',6,6'\}$ is a convex dominating set of cardinality $10$ in $\pi_kG_k$ for any $k$. Thus, the difference $\gamma_{con}(G)-\gamma_{con}(\pi G)$ can be arbitrarily large. 

\begin{center}
\begin{figure}[H]
	\begin{picture}(300,190)
	\put(20,-20){\includegraphics[scale=0.4]{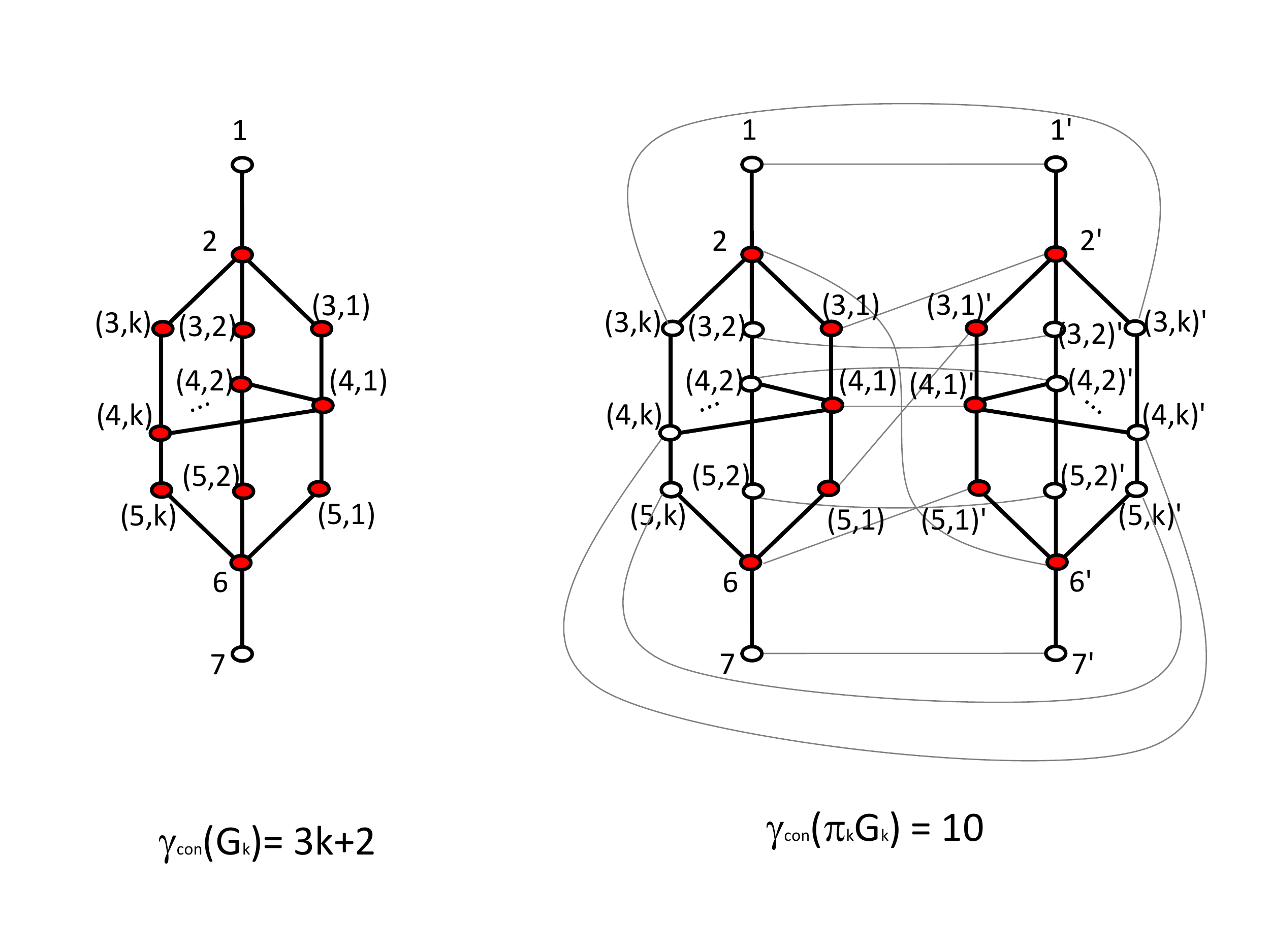}}
	\end{picture}
	\caption{The graphs $G_k$ and $\pi_kG_k$ and their $\gamma_{con}$-sets.}
	\label{fig:Gk}
\end{figure}
\end{center}

In fact, the above examples show a stronger property of the convex domination number.

\begin{rem}
The convex domination number of $\pi G$ cannot be bounded in terms of $\gamma_{con}(G).$
\end{rem}

\begin{proof}
Notice that for the graphs $G_k$ defined above $\gamma_{con}(\pi_kG_k)$ is constant, while $\gamma_{con}(G_k)$ grows with the increase of $k$. This shows that there is no upper bound on $\gamma_{con}(\pi G)$ depending only on $\gamma_{con}(G).$

Similarly, if $k$ is constant and $l$ increases, $\gamma_{con}(\pi_{k,l}T_{k,l})$ for the tree $T_{k,l}$ increases, while $\gamma_{con}(T_{k,l})$ remains constant. Thus there is no lower bound on $\gamma_{con}(G)$ depending solely on $\gamma_{con}(G)$.
\end{proof}

\end{document}